\theoremstyle{plain}
\newtheorem{theorem}{Theorem}[section]
\newtheorem{proposition}[theorem]{Proposition}
\newtheorem{lemma}[theorem]{Lemma}
\newtheorem{corollary}[theorem]{Corollary}
\newtheorem{algorithm}[theorem]{Algorithm}
\theoremstyle{remark}
\newtheorem{example}[theorem]{Example}
\newtheorem{definition}[theorem]{Definition}
\newtheorem{remark}[theorem]{Remark}
\begin{document}

\begin{center}

  \Large
  {\bf Algorithm for Direct Sampling from
    Conditional Distributions of Toric Models}
  \normalsize

  \bigskip By \bigskip

  \textsc{Shuhei Mano} and \textsc{Nobuki Takayama}

  \smallskip
  
  (The Institute of Statistical Mathematics and
   Kobe University, Japan)

  %\bigskip Version: \today

\end{center}

\small

{\bf Abstract.}
We show that contiguity relations of hypergeometric functions
of several variables give a direct sampling algorithm from
the conditional distribution of toric models in statistics.
The algorithm is based on a Markov chain on a lattice
generated by a matrix $A$. A correspondence between decomposable
graphical models and $A$-hypergeometric systems is discussed.
We give a sum formula of special values of $A$-hypergeometric
polynomials. Some examples with implementations are presented.

\smallskip

{\it Key Words and Phrases.}
$A$-hypergeometric system, discrete exponential family,
GKZ-hypergeometric system, graphical model, Markov chain
Monte Carlo

\smallskip

2020 {\it Mathematics Subject Classification Numbers.}
33C65, 33C90, 33F99, 62H17, 62R01, 65C05

\normalsize

\section{Introduction}
\label{sect:intr}

Consider a discrete sample with state space $[m]:=\{1,2,\ldots,m\}$
for $m\in\mathbb{N}$, where $\mathbb{N}$ is the set of positive
integers. In this paper, we will discuss the discrete exponential
families in statistics, which are also called log-affine models
or toric models.

\begin{definition}\label{defi:toric}
  Let $A=(a_{ij})$ $\in \mathbb{Z}^{d\times m}$ be a matrix of
  integers such that no row or column is the zero vector and
  $(1,\ldots,1)\in \text{rowspan}(A)$. Let
  $y\in\mathbb{R}^m_{>0}$. The {\it toric model} associated
  with $A$ is the set of probability distributions 
  \[
    \{p\in\text{int}(\Delta_{m-1}):
      \log p \in \log y+\text{rowspan}(A)\},
  \]
  where $\Delta_{m-1}$ is the standard $d$-dimensional simplex.
  If $y=1$, the model is called the {\it log-linear model.}  
\end{definition}

Suppose we have parameters $\phi\in\mathbb{R}_{>0}^d$. For the matrix
$A=(a_{ij})\in\mathbb{Z}^{d\times m}$ with
$(1,\ldots,1)\in \text{rowspan}(A)$, the vector $p$ in
Definition~\ref{defi:toric} may be parameterized as
\begin{equation}
  p_j=\frac{y_j}{Z(\phi,y)}
  \prod_{i\in[d]}\phi_i^{a_{ij}},
  \quad j \in [m]
  \label{para}
\end{equation}
with the normalization constant $Z(\phi,y)$ such that
$\sum_{j\in[m]} p_j=1$. The condition
$(1,\ldots,1)\in \text{rowspan}(A)$ is needed to admit 
the normalization constant. The set determined by the linear
transformation
\begin{equation}
  \mathcal{F}_A(b):=\{u:Au=b,u\in\mathbb{N}_0^m\},
  \quad \mathbb{N}_0:=\{0\}\cup\mathbb{N}
  \label{fiber}
\end{equation}
is called the $b$-{\it fiber} associated to
{\it configuration matrix} $A$ with the minimal sufficient
statistics
$b\in\mathbb{N}_0A:=\{\sum_{j\in[m]}\lambda_ja_j:\lambda_j\in\mathbb{N}_0\}$
for $\phi$, where $a_j$ denotes the $j$-th column vector of
$A$. We will use the notation $u_.:=\sum_{j\in[m]}u_j$, where
the centered dot ($\cdot$) in the position of the index $j$
means that the index $j$ is summed up. A sample consisting of
observations in which counts of the $j$-th state is $u_j$,
$j\in[m]$ taken from the multinomial distribution specified
by the probability mass function \eqref{para} follows
the probability law
\[
  \mathbf{P}(U=u,AU=b)=\frac{u_.!}{\{Z(\phi,y)\}^{u_.}}
  \prod_{i\in[d]}\phi_i^{b_i}
  \prod_{j\in[m]}\frac{y_j^{u_j}}{u_j!}.
\]
The conditional distribution given the minimal sufficient
statistics $b$ is
\begin{equation}
  \mathbf{P}(U=u|AU=b)=\frac{1}{Z_A(b;y)}\frac{y^u}{u!},
  \quad y^u:=\prod_{j\in [m]}y_j^{u_j}, \quad
  u!:=\prod_{j\in[m]}u_j!.
  \label{cond}
\end{equation}
The support is the $b$-fiber $\mathcal{F}_A(b)$.
Here, the normalization constant
\begin{equation}
  Z_A(b;y):=\sum_{u\in \mathcal{F}_A(b)}\frac{y^u}{u!}
  \label{Apol}
\end{equation}
is called the $A$-{\it hypergeometric polynomial,} or
the {\it GKZ-hypergeometric polynomial,} defined by Gel'fand,
Kapranov, and Zelevinsky in the end of the 1980's.
We adopt the convention $Z_A(b;y)=0$ if $b\notin\mathbb{N}_0A$.
It is straightforward to see that a sample consisting of
the Poisson random variables whose means are given by
the probability mass function \eqref{para} also follows
the conditional distribution \eqref{cond}.
%Takayama et al.
%\cite{TKT18} called the probability distribution \eqref{cond}
%the $A$-hypergeometric distribution.

%\begin{definition}[\cite{TKT18}]
%  For a matrix $A=(a_{ij})\in \mathbb{Z}^{d\times m}$
%  such that $(1,\ldots,1)\in \text{rowspan}(A)$ and
%  $b\in \mathbb{N}_0 A$, the probability distribution of
%  the vector $u\in\mathbb{N}_0^m$ given by the probability
%  mass function
%  \begin{equation}
%    p(u;y)=\frac{1}{Z_A(b;y)}\frac{y^u}{u!},
%    \label{Adist}
%  \end{equation}
%  with parameters $y\in\mathbb{R}_{>0}^m$ is called the
%  {\it $A$-hypergeometric distribution.} The support is
%  the $b$-fiber $\mathcal{F}_b(A)$ defined in \eqref{fiber}.
%\end{definition}

A typical use of sampling from a conditional distribution
from a toric model in statistics is a hypothesis testing.
Testing the hypothesis $y=1$ (log-linear) is specifically
called Fisher's exact test. Consider a two-by-two contingency
table. Under the independence of rows and columns,
the conditional distribution given marginal sums is called
the hypergeometric distribution. It is known that sampling from
the hypergeometric distribution is achieved by an urn model
(see \cite{Sul18}, Chapter 9). The conditional
distribution with $y\neq 1$, where $y$ is the odds ratio of
the interaction between rows and columns, is called
the generalized hypergeometric distribution. To the best of
the authors' knowledge, in contrast to the hypergeometric
distribution, sampling from the generalized hypergeometric
distribution had not been known. Moreover, if a log-linear model
is not graphical, or graphical but not chordal (see
Subsection~\ref{subs:dec} for the definitions), sampling
from such models had not been known. It had been considered
that sampling from such models is difficult
(see \cite{Sul18}, Chapter~9). The difficulty was one of
the motivation that Hastings \cite{Has70} proposed the use
of the Metropolis chain, which is one of the most common
tool in the Markov chain Monte Carlo methods.
In the Metropolis chain, the unique stationary distribution
of an ergodic Markov chain is designed to be the distribution
which we need. Diaconis and Sturmfels \cite{DS98} proposed
the use of Gr\"obner bases of the toric ideal of a configuration
matrix $A$ to construct a basis of moves in the Metropolis
chain. They called such a basis, or a generating set for
the toric ideal, a {\it Markov basis.} The state space of
the chain is the $b$-fiber, and in an implementation of
the chain the ratio of two monomials, which correspond to
the current state and the proposed next state, should be
computed in each step of the chain. A comprehensive treatment
of Markov bases is \cite{AHT12}.

Recently, one of the authors \cite{Man17} showed that direct
sampling from the conditional distribution of any toric models
is possible. Extensive discussion on the algorithm is
available in the monograph \cite{Man18}.
Throughout this paper, we call sampling from a distribution
exactly {\it direct sampling} to distinguish it from
approximate samplings resorting the use of a Metropolis
chain. In the context of graphical models, it might be
surprising that direct sampling is possible even when
the Markov property represented by the interaction graph
does not hold, because the Markov property is the key for
direct sampling from decomposable graphical models ever known,
see, e.g., Section 4.4 of \cite{Lau94}. In fact, the proposed
algorithm in \cite{Man17} utilizes another kind of Markov
property which any toric model possesses.

The purpose of this paper is two fold. The first purpose is
to present an efficient implementation of the algorithm when
the transition probability of the Markov chain run by
the algorithm is not available, by introducing a notion of
the Markov lattice defined in Section~\ref{sect:lattice}.
The Markov lattice represents the Markov property mentioned
above. The second purpose is to present concrete examples
to show how the algorithm works with aids of 
contiguity relations of hypergeometric functions of several
variables for well-known toric models including graphical
models, because the paper \cite{Man17} focused on toric models
associated with configuration matrices of two rows.

This paper is organized as follows. In Section~\ref{sect:lattice},
after reviewing the direct sampling algorithm proposed in
\cite{Man17}, or Algorithm~\ref{algo:0}, the Markov lattice,
which is a bounded integer lattice generated by the configuration
matrix $A$, is introduced. A single run of Algorithm~\ref{algo:0}
produces a sample path of the Markov chain on the Markov lattice,
where the transition probability is given by the
$A$-hypergeometric polynomials. If a closed form of an
$A$-hypergeometric polynomial is known for a special value,
we will call it an $A$-{\it hypergeometric sum formula}.
For such a case, the implementation is straightforward.
Otherwise, how to compute the transition probability in each
step of the algorithm is the key for an efficient implementation.
It is shown that the computation can be done along with each
sample path on the Markov lattice. The method is summarized as
Algorithm~\ref{algo:1}. In Section~\ref{sect:sum}, the cases
that the $A$-hypergeometric sum formula is known are discussed.
The celebrated formula of the normalization constants of
decomposable graphical models given by Sundberg \cite{Sun75} is revisited as
an $A$-hypergeometric sum formula. In Section~\ref{sect:examp},
some concrete examples are discussed. Algorithms~\ref{algo:0}
and \ref{algo:1} are implemented for the univariate Poisson
regression model and two-way contingency tables, and
the performances are evaluated and compared with those of
approximate samplings by Metroplis chains. The no-$l(\ge 3)$-way
interaction model, which is an important
class of log-linear models, is also discussed.

\section{The direct sampling algorithm}
\label{sect:lattice}

In this section we introduce the {\it Markov lattice,} which
is a bounded integer lattice generated by the configuration
matrix. The notion of the Markov lattice aids to construct
the direct sampling algorithms.

Recall that the $A$-hypergeometric polynomial \eqref{Apol}
is a solution of the $A$-hypergeometric system.

\begin{definition}
  For a matrix $A=(a_{ij})\in \mathbb{Z}^{d\times m}$ and
  a vector $b\in\mathbb{C}^d$, the
  $A$-{\it hypergeometric system} is the following system of
  linear partial differential equations for an indeterminate
  function $f=f(y)$:
  \begin{align}
    &(\sum_{j\in[m]}a_{ij}\theta_j-b_i)\bullet f=0,
    \quad  i\in[d], \quad
    \theta_j:=y_j\partial_j, \quad
    \partial_j:=\frac{\partial}{\partial y_j}, \quad
    \text{and}
    \label{HA1}\\
    &(\partial^{z^+}-\partial^{z^-})\bullet f=0,\quad
    z\in\text{Ker} A\cap {\mathbb{Z}}^m, \quad
    \partial^{z}:=\prod_{j\in[m]}\partial_j^{z_j},
    \label{HA2}
  \end{align}
  where $z^+_j=\max\{z_j,0\}$, $z^-_j=-\min\{z_j,0\}$,
  and $\bullet$ denotes the action of a differential operator
  to a function.
  The second group of operators in \eqref{HA2} generates
  the {\it toric ideal} of $A$.
\end{definition}

If $(1,\ldots,1)\in\text{rowspan}(A)$, the condition
\eqref{HA1} demands homogeneity of the $A$-hypergeometric
polynomial $Z_A(b;y)$:
\[
  (\sum_{j\in[m]} \theta_j-\deg(b))
  \bullet Z_A(b;y)=0,
\]
where $\deg(b):=\deg(Z_A(b;y))$
and $\deg$ denotes the degree of a polynomial. For example,
$\deg(y^u):=|u|:=\sum_{j\in[m]}u_j$.
By applying $\partial_k$ to
\eqref{HA1} from the left yields
\[
  \{\sum_{j\in[m]}a_{ij}\theta_j-(b_i-\sum_{j\in[m]}a_{ij})\}
  \partial_k \bullet f=0,
\]
or
\begin{equation}
  \partial_j Z_A(b;y)= Z_A(b-a_j;y), \qquad j\in[m],
  \label{cont}
\end{equation}
which leads to
\[
  \sum_{j\in[m]}\frac{\mu(b,b-a_j;y)}{\deg(b)}=1,
  \quad
  \mu(b,b-a_j;y):=\frac{Z_A(b-a_j;y)}{Z_A(b;y)}y_j.
\]
Here, $\mu(b,b-a_j;y)$ is the expectation of $u_j$:
\[
  \mathbf{E}(U_j|AU=b)=\frac{1}{Z_A(b;y)}
  \sum_{v\in\mathcal{F}_A(b)} v_j\frac{y^v}{v!}
  =\frac{\partial_j Z_A(b;y)}{Z_A(b;y)}y_j=
  \frac{Z_A(b-a_j;y)}{Z_A(b;y)}y_j.
\]
Let us consider a Markov chain of a vector in $\mathbb{N}_0^d$
runs over an integer lattice embedded in $\mathbb{N}_0 A$ with
the transition probability
\begin{equation}\label{kernel}
  M(b,b-a_j):=\frac{\mu(b,b-a_j;y)}{\deg(b)},
  \quad b-a_j\in\mathbb{N}_0 A, \quad j\in[m],
\end{equation}
and $M(b,b-a_j)=0$ if $b-a_j\notin\mathbb{N}_0 A$.
Since the contiguity relation \eqref{cont} implies that
$\deg(b-a_j)=\deg(b)-1$, the unique absorbing state of the Markov
chain is the zero vector. A sample path of the Markov chain is
specified by a sequence $(j_1,\ldots,j_{\deg(b)})\in [m]^{\deg(b)}$,
and the probability is 
\begin{align*}
  \mathbf{P}\{(j_1,\ldots,j_{\deg(b)})&\}=
  M(b,b-a_{j_1})M(b-a_{j_1},b-a_{j_1}-a_{j_2})\\
  &\cdots M(b-a_{j_1}-\cdots-a_{j_{\deg(b)-1}},
    b-a_{j_1}-\cdots-a_{j_{\deg(b)}}=0)\\
  =&\frac{\mu(b,b-a_{j_1};y)}{\deg(b)}
    \frac{\mu(b-a_{j_1},b-a_{j_1}-a_{j_2};y)}{\deg(b)-1}\\
  &\cdots\frac{\mu(b-a_{j_1}-\cdots-a_{j_{\deg(b)-1}},0;y)}{1}
  =\frac{1}{\deg(b)!}\frac{y^u}{Z_A(b;y)},
\end{align*}
where $u_j:=|\{t:j_t=j,t\in[\deg(b)]\}|$, $j\in[m]$. Accounting
the order of appearance of $[m]$ in the multiset
$\{j_1,\ldots,j_{\deg(b)}\}$, we have
\[
  \mathbf{P}(U=u)=\frac{|u|!}{u_1!\cdots u_m!}
  \mathbf{P}\{(j_1,\ldots,j_{\deg(b)})\}
  =\frac{1}{Z_A(b;y)}\frac{y^u}{u!}.
\]
This argument is summarized as the following algorithm.

\begin{algorithm}[\cite{Man17}]\label{algo:0}~
  \begin{itemize}
  \item[] Input: A matrix $A\in\mathbb{Z}^{d\times m}$,
    vectors $y\in\mathbb{R}_{>0}^m$,  $b\in\mathbb{N}_0A$
    with total number of counts $n$.
  \item[] Output: A vector of counts $(u_1,\ldots,u_m)$
    following the conditional distribution \eqref{cond} given
    $b$.     
  \end{itemize}    
  \begin{itemize}
      \item [] Step 1: Set $\beta=b$.

    \smallskip
    
    {\bf For} $t=1,2,\ldots,n$ {\bf do}

  \item [] ~~ Step 2: Compute $M(\beta,\beta-a_j)$ in \eqref{kernel}
    for each $j\in[m]$.

  \item [] ~~ Step 3: Pick $j_t=j$ with probability
    $M(\beta,\beta-a_j)$, $j\in[m]$.

  \item [] ~~ Step 4: Set $\beta\leftarrow \beta-a_{j_t}$.

    \smallskip
    
    {\bf End for}

  \item [] Step 5: Output $u_j:=\#\{t\in[n]:j_t=j\}$, $j\in[m]$.
  \end{itemize}
\end{algorithm}

Algorithm~\ref{algo:0} simulates the Markov chain of a vector in
$\mathbb{N}^d_0$ discussed above. It runs over an integer lattice
embedded in $\mathbb{N}_0A$. To describe the Markov chain
precisely, let us introduce the notion of the {\it Markov lattice.}

\begin{definition}
  Consider a matrix $A=(a_{ij})\in\mathbb{Z}^{d\times m}$ and
  a vector
  $b\in\mathbb{N}_0A=\sum_{j\in[m]}\mathbb{N}_0a_j\subset\mathbb{N}^d_0$.
  The {\it Markov lattice} $\mathcal{L}_A(b)$ is the bounded
  integer lattice embedded in $\mathbb{N}_0A$ equipped with
  the partial order
  \[
    v \in \mathbb{N}_0A~~\text{and}~~v-a_j \in \mathbb{N}_0A 
    \quad \Rightarrow \quad v-a_j \prec v,
  \]  
  and the maximum and the minimum are $b$ and $0$, respectively.
\end{definition}

A lattice consists of a partially ordered set in which any two
elements have a unique supremum and a unique infimum. In fact,
the Markov lattice is a lattice, because the unique supremum and
infimum of two elements $v=\sum_{j\in[m]}v_ja_j$ and
$w=\sum_{j\in[m]}w_ja_j$ are $\sum_{j\in[m]}\max\{v_j,w_j\}a_j$
and $\sum_{j\in[m]}\min\{v_j,w_j\}a_j$, respectively.

The state space of the Markov chain run by Algorithm~\ref{algo:0}
is the elements in Markov lattice $\mathcal{L}_A(b)$, where
an element $v$ of the Markov lattice specifies the $v$-fiber
$\mathcal{F}_A(v)$, or the $A$-hypergeometric polynomial $Z_A(v;y)$.
The transitions with positive probabilities occur between
{\it neighbors}, or from an element $v$ to an element $u$ satisfying
$u\prec v$.

\begin{remark}
There is a remarkable correspondence between the direct sampling
discussed in this paper and the Metropolis chain discussed in
\cite{DS98}. In the Metropolis chain, transitions are along with
the Markov basis with probabilities given by the ratio of two
monomials within the $b$-fiber, or $\{y^v:v\in\mathcal{F}_A(b)\}$,
while in the direct sampling transitions are along with
the Markov lattice $\mathcal{L}_A(b)$ with probabilities given
by the polynomials whose support is $\mathcal{F}_A(v)$, $v\prec b$. 
\end{remark}

\begin{example}[Two-way contingency tables of the independence model]\label{exam:2x2:1}
  We discuss the $2\times 2$ contingency table, but
  the same argument holds for general two-way
  contingency tables. The $2\times 2$ table is
  \[
  \begin{array}{cc|c}
    u_{11}&u_{12}&u_{1\cdot}\\
    u_{21}&u_{22}&u_{2\cdot}\\
    \hline
    u_{\cdot1}&u_{\cdot2}&
  \end{array},
  \]
  where $u_{i\cdot}=u_{i1}+u_{i2}$ and $u_{\cdot j}=u_{1j}+u_{2j}$,
  $i,j\in[2]$. The configuration matrix, the vectors of
  the observed counts, and the vector of the sufficient
  statistics are
  \begin{equation*}
    A\!=\!\left(\begin{array}{cccc}
      1&1&0&0\\
      0&0&1&1\\
      1&0&1&0\\
      0&1&0&1
    \end{array}\right),\quad
    u\!=\!\left(
    \begin{array}{c}
      u_{11}\\u_{12}\\u_{21}\\u_{22}
    \end{array}
    \right),\quad {\rm and}\quad
    b=\left(
    \begin{array}{c}
      u_{1\cdot}\\u_{2\cdot}\\u_{\cdot1}\\u_{\cdot2}
    \end{array}
    \right),
  \end{equation*}
  respectively.
  The conditional distribution of the independence
  model is a log-linear model and \eqref{cond} becomes
  \[
    \mathbf{P}(U=u|AU=b)
    =\frac{1}{Z_A(b;1)}\frac{1}{u_{11}!u_{12}!u_{21}!u_{22}!}.
  \]
Figure~\ref{fig1} is the Hasse diagram of an example.
\begin{figure}
  \begin{center}
\includegraphics[width=0.5\textwidth]{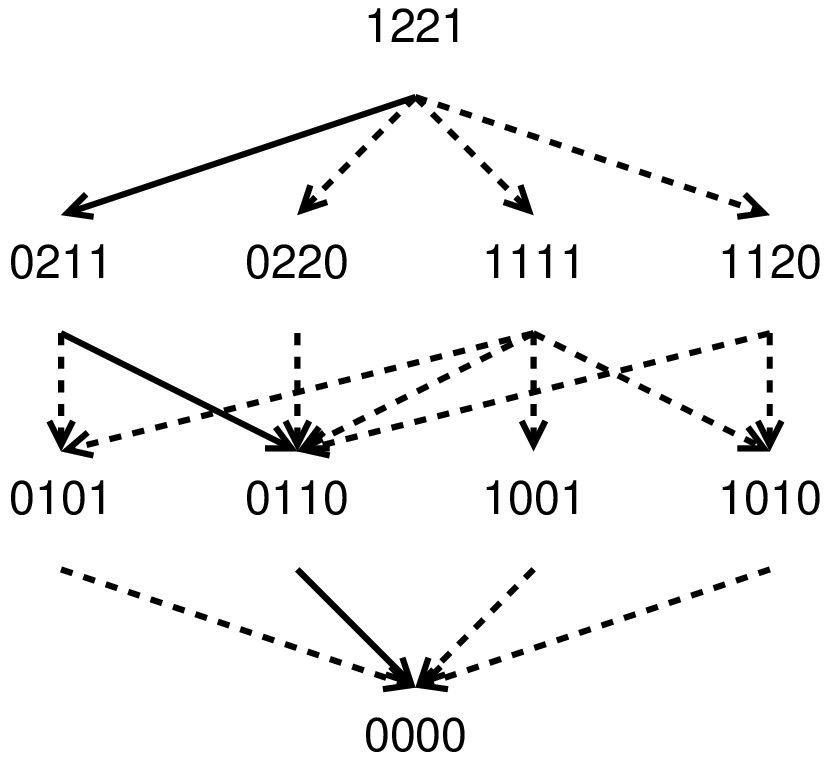}
\caption{The Markov lattice for the matrix $A$
  in Example~\ref{exam:2x2:1} with the maximum $b=(1,2,2,1)^\top$.
  A sample path is shown by solid edges.} 
\label{fig1}
\end{center}
\end{figure}
Transitions occur along the edges. The sample path,
shown with solid edges, is equal to one contingency
table with sufficient statistics $b=(1,2,2,1)^\top$.
In the first line of the following tables, asterisks
mark the cells picked during each solid edge; each
time a cell is picked, the corresponding marginal
count drops by one. The second line shows how
the contingency table is created by listing these
picked counts. 
\begin{align}
  \begin{array}{cc|c}
   *& &1\\
   & &2\\
  \hline  
  2& 1  
  \end{array}
  &~\quad\to&
  \begin{array}{cc|c}
   & &0\\
    &*&2\\
  \hline
  1& 1  
  \end{array}
  &~\quad\to&
  \begin{array}{cc|c}
   & &0\\
  *& &1\\
  \hline  
  1& 0  
  \end{array}
  &~\quad\to&
  \begin{array}{cc|c}
   & &0\\
    & &0\\
  \hline  
  0& 0  
  \end{array}\nonumber\\
  &
  \begin{array}{cc|c}
  1&0&1\\
  0&0&0\\
  \hline  
  1&0&   
  \end{array}& &
  \begin{array}{cc|c}
  1&0&1\\
  0&1&1\\
  \hline  
  1&1&   
  \end{array}& &
  \begin{array}{cc|c}
  1&0&1\\
  1&1&2\\
  \hline  
  2&1&   
  \end{array}&\label{tables}
\end{align}
\end{example}

To implement Step 2, we do not need to calculate the transition
probability $M(\beta,\beta-a_j)$ itself, because we can simply
calculate
\begin{equation}\label{mu_tilde}
  \tilde{\mu}(\beta,\beta-a_j;y):=Z_A(\beta-a_j;y)y_j, \quad
  j\in[m]
\end{equation}
and normalize them. If a closed form of
$\tilde{\mu}(\beta,\beta-a_j;y)$ is known, the computational
complexity of Algorithm~\ref{algo:0} is $O(dmn)$, whose dominant
contribution comes from updating $\beta$ for each $j\in[m]$.
Even otherwise, in principle, we can compute
$\tilde{\mu}(\beta,\beta-a_j;y)$ for any matrix $A$. We recall
how to compute the $A$-hypergeometric polynomial.

Let $H_A(b)$ denotes the left ideal of the Weyl algebra $D$
of indeterminants $y_1,\ldots,y_m$ generated by
the annihilators \eqref{HA1} and \eqref{HA2} with
the commutation rule
\[
  \partial_j h(y)=h(y)\partial_j+
  \partial_j \bullet h(y), \qquad h(y)\in\mathbb{Q}[y_1,\ldots,y_m].
\]
We call $H_A(b)$ the $A$-{\it hypergeometric ideal} with
parameters $b$.
%In the following discussion, it is convenient
%to work with the ring of differential operators
%$R=\mathbb{Q}(y_1,\ldots,y_m)\langle\partial_1,\ldots,\partial_m\rangle$
Consider a coset of $D$ modulo $H_A(b)$, or $D/H_A(b)$.
A monomial $u\notin\text{in}_\prec(H_A(b))$
with respect to a monomial term order $\prec$ is called
a {\it standard monomial} with respect to the initial ideal
$\text{in}_\prec(H_A(b))$. The set of standard monomials
$\{\partial^{u(0)}=1,\partial^{u(1)},\ldots,\partial^{u(r-1)}\}$
forms a basis of $D/H_A(b)$ as
the vector space of the holomorphic solutions of the $A$-hypergeometric
system, where we call the dimension $r$ the {\it holonomic rank} of
$H_A(b)$. It is known that $r=\text{vol}(A)$, where
$\text{vol}(A)$ is the volume of the convex hull of
$\{0, a_1,\ldots,a_m\}$ with the unit being the volume of
the unit simplex $\{0, e_1,\ldots,e_d\}$ and $e_i$ is
a unit vector in $\mathbb{N}^d$ whose $i$-th component is 1
and the other components are zero \cite[Theorem 1.4]{GKZ90},
\cite[Theorem 3]{OT09}.
The hypergeometric system $H_A(b)$ can be transformed into
the {\it Pfaffian system}
\[
  \theta_j \bullet q(b;y)=P_j(b;y) q(b;y), \quad
  j\in [m]
\]
with
\begin{equation}
  q(b;y):=(\theta^{u(k)}Z_A(b;y):k\in 0\cup[r-1])^\top,
  \label{GMvec}
\end{equation}
where the elements of the $r\times r$ matrix $P_j(b;y)$
are rational functions of $y$, and the vector $q(b;y)$ will
be called the {\it Gauss--Manin vector} of $Z_A(b;y)$.
They are obtained via a Gr\"oebner basis normal form with
respect to a Gr\"obner basis of the hypergeometric ideal
generated by $H_A(b)$ \cite[Theorem 1.4.22]{SST00}.
By using the contiguity relation \eqref{cont}, we have
a recursion for the vector $q$,
\[
  y_jq(b-a_j;y)=\tilde{P}_j(b;y)q(b;y), \quad j\in[m],
  \quad q(0;y)=(1,0,\ldots,0)^\top,
\]
where
\[
  \tilde{P}_j(b;x)=(E_r+S_j)P_j(b;x), \quad
S_j=\left(\begin{array}{cccc}
  0 & 0 & \cdots & 0 \\
  -y_j^{-1}\theta^{u(1)}(y_j) & 0 & \cdots & 0 \\
  \vdots & \vdots & \ddots & \vdots \\
  -y_j^{-1}\theta^{u(r-1)}(y_j) & 0 & \cdots & 0
\end{array}\right)  
\]
and $E_r$ is the $r\times r$ identity matrix.
Ohara and Takayama \cite{OT15} showed that $q_0(b;y)=Z_A(b;y)$
is obtained by the matrix multiplications:
\begin{align}
  q(b;y)=&y_{j_1}\cdots y_{j_{\deg(b)}}\nonumber\\
  &\times
  \tilde{P}_{j_1}^{-1}(b;y)\tilde{P}^{-1}_{j_2}(b-a_{j_1};y)\cdots
  \tilde{P}^{-1}_{j_{\deg(b)}}(b-a_{j_1}-\cdots-a_{j_{\deg(b)-1}};y)
  \nonumber\\
  &\times q(0;y),
  \label{recQ}  
\end{align}
for a sequence of vectors $b$, $b-a_{j_1}$, $\ldots$, $0$,
where we assumed the matrices $\tilde{P}_j(b;y)$ are invertible
(see below for the assumption). They
called this method to obtain the Gauss--Manin vector
the difference holonomic gradient method. Hence, in principle,
we can obtain \eqref{mu_tilde}.
In practice, computation of a Gr\"obner basis is
expensive, but such computation can be avoided if the matrices
$P_j(\beta;y)$ are computed by another way with
using specific properties of each $A$-hypergeometric system.
See Section~\ref{sect:examp} for some examples.

Since $(E_r+S_j)$ are invertible, $\tilde{P}_j(b,y)$ are
invertible if and only if $P_j(b,y)$ are invertible.
If $D/H_A(b)$ and $D/H_A(b-a_j)$ are isomorphic,
$P_j(b,y)$ are invertible at a genetic point $y\in\mathbb{R}^m_{>0}$
and the resulting matrix multiplications are basis changes.
In this respect, we have the following proposition.

A matrix $A$ is said to be {\it normal} if
$\mathbb{N}_0A=\mathbb{Z}A\cap\mathbb{Q}_{\ge 0}A$, where
the cone $\mathbb{Q}_{\ge 0}A$ is defined as
$\mathbb{Q}_{\ge 0}A=\{\sum_{j\in [m]}\lambda_j a_j:\lambda_j\in\mathbb{Q}_{\ge 0}\}$.
Many important examples are nomal. In fact, all examples appear
in this paper are notmal, including the Aomoto--Gel'fand
hypergeometric systems in Subsection~\ref{subs:2}, the classical
univariate hypergeometric series \eqref{chyp}, the decomposable
graphical models in Subsection~\ref{subs:dec} \cite[Corollary 4.25] {HS02},
and the no-three-way interaction models of levels
$(r_1,r_2,r_3)=(2,2,2), (3,3,2)$ \cite[Theorem 2.2] {BHI11}.

For a facet $\sigma$ of $\mathbb{Q}_{\ge 0}A$, the linear form
$F_\sigma$ satisfying the following conditions is unique and
called the {\it primitive integral support function}:

\begin{itemize}

\item[(i)] $F_\sigma(\mathbb{Z}A)=\mathbb{Z}$;

\item[(ii)] $F_\sigma(a_j)\ge 0$ for all $j\in[m]$;

\item[(iii)] $F_\sigma(a_j)=0$ for all $a_j\in\sigma$.

\end{itemize}  
  
\begin{proposition}\label{prop:iso1}
  When a configuration matrix $A$ is normal, $D/H_A(\beta)$ and
  $D/H_A(\beta')$ are isomorphic as left $D$ modules for any
  $\beta, \beta' \in \mathcal{L}_A(b)$. In particular, there
  exists an isomorphism of the holomorphic solution spaces of
  them as vector spaces. 
\end{proposition}

\begin{proof}
It follows from \cite[Theorem 5.2] {Saito01} that
$M(\beta)$ and $M(\beta')$ are isomorphic if and only if
\[
\{ \sigma \ \text{facet} : F_\sigma(\beta) \in \mathbb{N}_0 \}
= 
\{ \sigma \ \text{facet} : F_\sigma(\beta') \in \mathbb{N}_0 \}.
\]
Since $\beta, \beta'\in \mathbb{N}_0 A$, we have
$F_\sigma(\beta), F_\sigma(\beta') \in \mathbb{N}_0$ for any facet
$\sigma$. Thus, the condition holds.
\end{proof}

An important non-normal example with the isomorphism is
the $A$-hypergeometric system associated with generic monomial curves,
or $d=2$, which was considered in depth in \cite{Man17}.
\cite[Theorem 6.3] {Saito01} gives the following proposition.
The proof is omitted here, as it is similar to Proposition~\ref{prop:iso1}.

\begin{proposition}
  When a configuration matrix $A$ of $d=2$, $D/H_A(\beta)$ and
  $D/H_A(\beta')$ are isomorphic as left $D$ modules for any
  $\beta, \beta' \in \mathcal{L}_A(b)$. In particular, there
  exists an isomorphism of the holomorphic solution spaces of
  them as vector spaces. 
\end{proposition}

We describe how a version of Algorithm~\ref{algo:0} is constructed.
Define a vector
\[
  \tilde{q}(v;y):=(\theta_1,\ldots,\theta_m)^\top\bullet Z_A(v;y),
  \quad v\in\mathcal{L}_A(b).
\]
Since the set of standard monomials forms a basis of the vector
space $R/H_A(b)$, there exists an $m\times r$ matrix $T(v,y)$
whose elements are rational functions of $y$ satisfying
\[
  \tilde{q}(v;y)=T(v;y)q(v;y),
\]
where $q(v;y)$ is the Gauss--Manin vector defined in \eqref{GMvec}.
The transition probabilities can be obtained along with a sample
path $(j_1,j_2,\ldots)$ as follows. For the first step, we compute
\[
  \tilde{\mu}(b,b-a_{j};y)=\tilde{q}_{j}(b;y)=
  \sum_{k\in 0\cup[r-1]}t_{jk}(b;y)q_k(b;y), \quad j\in[m].
\]
If $j=j_1$ is chosen, we update the Gauss--Manin vector
\begin{align*}
  q_k(b-a_{j_1};y)
  &=y_{j_1}^{-1}\{\tilde{P}_{j_1}(b;y)q(b;y)\}_k\\
  &=y_{j_1}^{-1}\sum_{l\in 0\cup[r-1]}\{\tilde{P}_{j_1}(b;y)\}_{kl}q_l(b;y),
  \quad k\in0\cup[r-1],
\end{align*}
which gives
\[
  \tilde{\mu}(b-a_{j_1};b-a_{j_1}-a_{j};y)=
  \sum_{k\in 0\cup[r-1]}t_{jk}(b-a_{j_1};y)q_k(b-a_{j_1};y),
  \quad j\in[m].
\]
Then, if $j=j_2$ is chosen, we update the Gauss--Manin vector
\[
  q_k(b-a_{j_1}-a_{j_2};y)
    =y_{j_2}^{-1}
    \sum_{l\in 0\cup[r-1]}\{\tilde{P}_{j_2}(b-a_{j_1};y)\}_{kl}
    q_l(b-a_{j_1};y), \quad k\in0\cup[r-1],
\]
which gives
\begin{align*}
  \tilde{\mu}(b-a_{j_1}-&a_{j_2};b-a_{j_1}-a_{j_2}-a_j)\\
  &=\sum_{k\in 0\cup[r-1]}t_{jk}(b-a_{j_1}-a_{j_2};y)
  q_k(b-a_{j_1}-a_{j_2};y),\quad j\in[m].
\end{align*}
Note that only the sequence of the Gauss--Manin vectors
$q_k(b;y)$, $q_k(b-a_{j_1};y)$, $q_k(b-a_{j_1}-a_{j_2};y)$,
\ldots along the sample path appears. This is a natural
consequence of Algorithm~\ref{algo:0} being a simulation
of the Markov chain, meaning that the transition probabilities
solely depend on the current state.

The above observation leads to the following version of
Algorithm~\ref{algo:0}.

\begin{algorithm}\label{algo:1}~
    \begin{itemize}
  \item[] Input: A matrix $A\in\mathbb{Z}^{d\times m}$,
    vectors $y\in\mathbb{R}_{>0}^m$,  $b\in\mathbb{N}_0A$
    with total number of counts $n$.
  \item[] Output: A vector of counts $(u_1,\ldots,u_m)$
    following the conditional distribution \eqref{cond} given
    $b$.     
  \end{itemize}    
  \begin{itemize}
    \item [] Step 1: Initialize the Gauss--Manin vector
      $q(b;y)$ defined in \eqref{GMvec} using the matrix
      multiplication \eqref{recQ}.
    \item [] Step 2: Set $\beta=b$.

    \smallskip
    
    {\bf For} $t=1,2,\ldots,n$ {\bf do}

  \item [] ~~ Step 3: Compute
    $\tilde{\mu}(\beta,\beta-a_j;y)=\{T(\beta;y)q(\beta;y)\}_j$
    for each $j\in[m]$.

  \item [] ~~ Step 4: Pick $j_t=j$ with probability
    proportional to $\tilde{\mu}(\beta,\beta-a_j;y)$, $j\in[m]$.

  \item [] ~~ Step 5: Compute
      \[
        q_k(\beta-a_{j_t};y)=
        y_{j_k}^{-1}
        \{\tilde{P}_{j_t}(\beta;y)q(\beta;y)\}_k, \quad k\in 0\cup[r-1],
      \]

  \item [] ~~ Step 6: Set $\beta\leftarrow \beta-a_{j_t}$.

    \smallskip
    
    {\bf End for}

  \item [] Step 5: Output $u_j:=\#\{t\in[n]:j_t=j\}$, $j\in[m]$.
  \end{itemize}

\end{algorithm}

Let us see the computational complexity of Algorithm~\ref{algo:1}.
We ignore the cost demanded by the initialization, or Step 1,
because we do not have to repeat the computation. Since Step 3
demands $O(mr)$, Step 5 demands $O(r^2)$, and the other steps
demand smaller costs, we have the following estimate.
%, which was announced as Theorem 2 in \cite{TM19}.

\begin{proposition}\label{prop:cost}
  The computational complexity of Algorithm~\ref{algo:1} is
  $O(\max\{m,r\}rn)$, where $n=\deg(Z_A(b;y))$ and $r={\rm vol}(A)$,
  with assuming costs of rational arithmetic to be $O(1)$.
\end{proposition}

\section{Cases that sum formulae are known}
\label{sect:sum}

In this section we discuss Algorithm~\ref{algo:0} for the case
that the $A$-hypergeometric sum formula is known. After
warming up by considering 
  two-way contingency tables of the independence model
in Example~\ref{exam:2x2:1}, the celebrated
formula of the normalization constant of decomposable
graphical models given by Sundberg \cite{Sun75} is revisited.

For $\alpha,\beta,\gamma\in\mathbb{C}$, if
$\text{Re}(\gamma)>\text{Re}(\beta)>0$, the Gauss hypergeometric
series
\begin{equation}
  {}_2F_1(\alpha,\beta;\gamma;z)
  :=\sum_{u\ge 0}\frac{(\alpha)_u(\beta)_u}{(\gamma)_u}\frac{z^u}{u!},
  \quad z\in\mathbb{C},
  \label{Ghyp}
\end{equation}
where $(\alpha)_u$ is the Pochhammer symbol:
$(\alpha)_u:=\alpha(\alpha+1)\cdots(\alpha+u-1)$, has Euler's
integral
\[
  {}_2F_1(\alpha,\beta;\gamma;z)=\frac{\Gamma(\gamma)}{\Gamma(\beta)
    \Gamma(\gamma-\beta)}
  \int_0^1t^{\beta-1}(1-t)^{\gamma-\beta-1}(1-tz)^{-\alpha}dt.
\]
If $z=1$ the right hand side becomes a beta-integral and we have
the identity (Theorem 2.2.2 of \cite{AAR96})
\begin{equation*}
  {}_2F_1(\alpha,\beta;\gamma;1)
  =\frac{\Gamma(\gamma)\Gamma(\gamma-\alpha-\beta)}
  {\Gamma(\gamma-\alpha)\Gamma(\gamma-\beta)},
  \quad \text{Re}(\gamma)>\text{Re}(\alpha+\beta),
  \quad -\gamma\notin \mathbb{N}_0.
\end{equation*}
This sum formula of the Gauss hypergeometric series is called
the Gauss hypergeometric theorem. For the classical univariate
hypergeometric series
\begin{equation}
  {}_kF_{k-1}(c_1,\ldots,c_k;d_2,\ldots,d_k;z)
  :=\sum_{u\ge 0}\frac{(c_1)_u\cdots(c_k)_u}
  {(d_2)_u\cdots(d_k)_u}\frac{z^u}{u!}, \quad
  k\in\mathbb{N}\setminus\{1\},
  \label{chyp}
\end{equation}
several identities at $z=1$ are known, see, e.g. Sections 4.4
and 4.5 of \cite{Erd53}, Chapters 2 and 3 of \cite{AAR96}.
If the $A$-hypergeometric sum formula is available at
a specific value, say $y_0$, the computation of the transition
probability of the Markov chain run by Algorithm~\ref{algo:0}
with $y_0$ is straightforward. The independence model of
two-way contingency tables is such an example.

\subsection{Two-way contingency tables of the independence model}
\label{subs:2}

The two-way contingency table $(u_{ij}:i\in[r_1],j\in[r_2])$
given marginal sums is a toric model.
The conditional distribution given marginal sums $b$ is
given by the with matrix
\begin{equation}
  A=\left(\begin{array}{c}
  E_{r_1} \otimes 1_{r_2}\\
  1_{r_1} \otimes E_{r_2}
  \end{array}\right), \quad
  b=(u_{1\cdot},\ldots,u_{r_1\cdot},u_{\cdot1},\ldots,u_{\cdot r_2})^\top,
  \quad
  1_r:=(\overbrace{1,\ldots,1}^r),
  \label{Ab2}
\end{equation}
for the state vector
\[
  (u_{11},\ldots,u_{1r_2},u_{21},\ldots,u_{2r_2},\ldots,
   u_{r_11},\ldots,u_{r_1r_2})^\top,
\]
where $u_{1\cdot}=\sum_{j\in[r_2]}u_{1j}$ and $\otimes$
denotes the Kronecker product of matrices. 
The parameters are
\[
  z_{ij}=\frac{y_{ij}y_{r_1r_2}}{y_{ir_2}y_{r_1j}}, \quad
  z_{r_1i}=z_{jr_2}=z_{r_1r_2}=1, \quad i\in[r_1-1],~j\in[r_2-1].
\]
Here,
\[
  (u_{r_1\cdot}+u_{\cdot r_2}-n)!\prod_{i\in[r_1-1]}
  u_{i\cdot}!\prod_{j\in[r_2-1]}u_{\cdot j}!~Z_A(b;y)
\]
is a hypergeometric polynomial of type $(r_1,r_1+r_2)$,
or an Aomoto--Gel'fand hypergeometric polynomial associated with
the matrix $A$ and the vector $b$ in \eqref{Ab2}.
The hypergeometric polynomial of type $(r_1,r_1+r_2)$ is defined
as
\begin{align}
  &F(\alpha,\beta,\gamma;z)\nonumber\\
  &=\sum_{u\in\mathcal{F}_A(b)}
  \frac{
  \prod_{i\in[r_1-1]}(\alpha_i)_{u_{i\cdot}-u_{ir_2}}
  \prod_{j\in[r_2-1]}(\beta_j)_{u_{\cdot j}-u_{r_1j}}}
       {(\gamma)_{\sum_{i\in[r_1-1]}\sum_{j\in[r_2-1]}u_{ij}}}
  \prod_{i\in[r_1-1]}\prod_{j\in[r_2-1]}\frac{{z_{ij}}^{u_{ij}}}
  {u_{ij}!},
  \label{thyp}
\end{align}
where $\alpha=(-u_{1\cdot},\ldots,-u_{r_1-1,\cdot})$,
$\beta=(-u_{\cdot 1},\ldots,-u_{\cdot,r_2-1})$, and
$\gamma=u_{r_1\cdot}+u_{\cdot r_2}-u_{\cdot\cdot}+1$.
A hypergeometric function of type $(r_1,r_1+r_2)$ can be said
to be a hypergeometric function on a Grassmannian, and
various hypergeometric series appear as special cases. See
Section~3 of \cite{AK11} for the background.

If rows and columns are independent, or $y=1$, the model
reduces to the log-linear model discussed in Example~\ref{exam:2x2:1},
and the normalization gives a generalization of the Gauss
hypergeometric theorem:
\begin{equation}\label{GHT}
  Z_A(b;1)=\sum_{u\in\mathcal{F}_A(b)}\frac{1}{u!}
  =\frac{u_{\cdot\cdot}!}{\prod_{i\in[r_1]}u_{i\cdot}!
   \prod_{j\in[r_2]}u_{\cdot j}!},
\end{equation}
which can be confirmed with Euler's integral associated with 
the hypergeometric polynomial of type $(r_1,r_1+r_2)$
\cite[Theorem 3.3]{AK11}. Let $a_{(i,j)}$ denotes the column
vector of the matrix $A$ which specifies the $(i,j)$-entry of
the contingency table. The transition probability of the Markov
chain run by Algorithm~\ref{algo:0} becomes
\begin{equation}
  M(b,b-a_{(i,j)})=\frac{Z_A(b-a_{(i,j)};1)}{Z_A(b;1)}
  \frac{1}{u_{\cdot\cdot}}=\frac{u_{i\cdot}}{u_{\cdot\cdot}}
  \frac{u_{\cdot j}}{u_{\cdot\cdot}}.
  \label{urn}
\end{equation}

\subsection{Decomposable graphical models}
\label{subs:dec}

Let us recall some basic concepts around graphical models.
See Subsections 2.1 and 4.4 of \cite{Lau94} for the details.
Note that our discussion is restricted to discrete random
variables.

A {\it simplicial complex} with ground set $V$ is a set
$\Gamma\subseteq 2^V$ such that if $F\in\Gamma$ and
$F'\subseteq F$, then $F'\in\Gamma$. An element of $\Gamma$
will be called the {\it faces} of $\Gamma$, and the inclusion
maximal faces are the {\it facets} of $\Gamma$. A simplicial
complex is specified by listing its facets. For instance,
$\Gamma=[12][13][23]$ is the bracket notation of the simplicial
complex
\[
  \Gamma=\{\emptyset,\{1\},\{2\},\{3\},\{1,2\},\{1,3\},\{2,3\}\}.
\]
The brackets are also used to represent sets of integers, but
the distinction will be obvious from the context. Consider
the ground set $V=\{1,\ldots,|V|\}=[|V|]$ and
discrete random variables, $X_1,\ldots,X_{|V|}$, where $X_i\in[r_i]$
for some {\it level} $r_i\in\mathbb{N}$, $i\in V$.
If $r_1=\cdots=r_{|V|}=2$, the model is called {\it binary}.
The joint state space of a random vector $X=(X_1,\ldots,X_{|V|})$
is $\mathcal{I}_V=\prod_{i\in V}[r_i]$. For example, when
$\Gamma=[1][2]\subset 2^{\{1,2\}}$ and the levels are $r_1=2$
and $r_2=3$,
$\mathcal{I}_V=\{(1,1),(1,2),(1,3),(2,1),(2,2),(2,3)\}$.
For a state $i_V=(i_1,\ldots,i_{|V|})\in\mathcal{I}_V$ and
a subset $F=\{f_1,f_2,\ldots\}\subset V$, we write
$i_F=(\cdots,i_{f_1},\cdots,i_{f_2},\cdots,\ldots)$,
where the centered dots $(\cdot)$ are used to keep the places
of $V\setminus F$. In other words, $i_j$ of $i_F$ takes
a value in $[r_j]$ if $j\in F$ and $i_j=\cdot$ if $j\notin F$.
For example, for the binary simplicial complex
$\Gamma=[123][124]\subset 2^{\{1,2,3,4\}}$,
the set $\mathcal{I}_F$ with $F=\{1,2\}$ is
$\{(1,1,\cdot,\cdot),(1,2,\cdot,\cdot),(2,1,\cdot,\cdot),(2,2,\cdot,\cdot)\}$.
Commas in states will be omitted hereafter, if it causes
no confusion. The random vector $X_F=(X_f)_{f\in F}$ has
the state space $\mathcal{I}_F=\prod_{f\in F}[r_f]$.

\begin{definition}\label{defi:log-lin}
  Let $\Gamma\subseteq 2^V$ be a simplicial complex. For each
  facet $F\in\Gamma$, we introduce a set of $|\mathcal{I}_F|$
  positive parameters $\phi_{i_F}$.
  The {\it hierarchical log-linear model} associated with
  $\Gamma$ is the set of all probability distributions
  \begin{equation}\label{para2}
    \mathcal{M}_\Gamma=
    \left\{
      p\in\text{int}(\Delta_{|\mathcal{I}_V|-1}):
      p_{i_V}=\frac{1}{Z(\phi)}\prod_{F\in\text{facet}(\Gamma)}
      \phi_{i_F}, i_V \in \mathcal{I}_V
    \right\},
  \end{equation}
  where $Z(\phi)$ is the normalizing constant.
\end{definition}

Note that the parameterization \eqref{para2} is a form of
the parameterization \eqref{para} with $y=1$, if
the configuration matrix $A$ is appropriately chosen.
The configuration matrix $A$ is a binary matrix with
$m=|\mathcal{I}_V|$ columns and
$d=\sum_{F\in\Gamma}|\mathcal{I}_F|$ rows, and determines
the $b$-fiber \eqref{fiber}. Let $F,F'\subseteq V$. If states
of subsets $i_F\in\mathcal{I}_F$ and $i_{F'}\in\mathcal{I}_{F'}$
have no contradiction, that is, if all the elements are not
different between $i_F$ and $i_{F'}$ by regarding $\cdot$ as
a wildcard and the number of $\cdot$ in
$i_F$ is smaller than that of $i_{F'}$, we write
$i_F \subset i_{F'}$.
For example, $(11\cdot1) \subset (\cdot1\cdot1)$ but
$(11\cdot1) \not\subset (\cdot2\cdot1)$.
This is a consistent notation if we regard
the $i$-th $\cdot$ as the set $[r_i]$.
Let $u(i_V)\in\mathbb{N}_0^{\mathcal{I}_V}$ be an
$r_1\times\cdots\times r_{|V|}$ contingency table, which
is also denoted by $u_{i_V}$ for saving space. For any subset
$F=\{f_1,f_2,\ldots\}\subset V$, let $u(i_F)$ be
the $r_{f_1}\times r_{f_2}\times\cdots$ marginal table such
that $u(i_F)=\sum_{j\in\mathcal{I}_{V\setminus F}}u({i_F,j})$.
In other words, the symbol $u(i_F)$ is the sum of the count
$u(i_V)$ of the state $i_V\in\mathcal{I}_V$ such that
$i_V \subset i_F$. For example,
$u_{\cdot1\cdot2}=u(\cdot1\cdot2)=\sum_{i_1\in[r_1]}\sum_{i_3\in[r_3]} u(i_11i_32)$.
Hence, for a simplicial complex $\Gamma=[F_1][F_2]\cdots $,
the configuration matrix $A$ determines the linear
transformation $u(i_V)\mapsto (u(i_{F_1}),u(i_{F_2}),\ldots)$,
or minimal sufficient statistics for $\phi$.

For a hierarchical log-linear model $\mathcal{M}_{\Gamma}$
associated with the simplicial complex $\Gamma$, we associate
the undirected graph $\mathcal{G}(\Gamma)=(V,E)$ with edges
satisfying
\[
  a \sim b ~ \Leftrightarrow ~ \{a,b\} \subseteq
  F~\text{for~some}~F \in \Gamma.
\]
This graph is called the {\it interaction graph}. Different hierarchical
log-linear models may have the same interaction graph.
For instance, both simplicial complexes $[123]$ and $[12][13][23]$
have the complete three-graph as their interaction graph.
A hierarchical log-linear model is {\it graphical} if
the associated simplicial complex $\Gamma$ exactly consists
of the cliques (maximal complete subgraph) of its interaction
graph, namely, the facets of $\Gamma$ are the cliques of
$\mathcal{G}(\Gamma)$. Thus $[123]$ is graphical whereas
$[12][13][23]$ is not.

A subset $S$ of the vertex set $V$ is said to be
$(f,f')$-{\it separator} if all paths from a vertex $f$ to
$f'$ intersect $S$. The subset $S$ is said to separate $F$
from $F'$ if it is an $(f,f')$-separator for every $f\in F$,
$f'\in F'$. A triple $(F,F',S)$ of disjoint subsets of
the vertex set $V$ of an undirected graph $\mathcal{G}$ is
said to form a {\it decomposition} of $\mathcal{G}$ if
$V=F\cup F'\cup S$, where the {\it separator} $S$ is
a complete subset of $V$ and separates $F$ from $F'$.
We allow some of the sets in $(F,F',S)$ to be empty.
If the set of $F$ and $F'$ are both non-empty,
the decomposition is called {\it proper}. An undirected graph
is said to be {\it decomposable} if it is complete, or if
there exists a proper decomposition $(F,F',S)$ into
decomposable subgraphs $\mathcal{G}_{F\cup S}$ and
$\mathcal{G}_{F'\cup S}$. A graphical model is called
decomposable if the interaction graph is decomposable.
It is known that an undirected graph is decomposable if
and only if it is {\it chordal}, namely, every cycle of
length larger than three possesses a chord \cite[Proposition 2.5]{Lau94}.

Graphical models are used to study conditional independence
in the situation where we have a collection of random variables.
A probability measure of $X=(X_1,\ldots,X_{|V|})$ taking values
in the joint state space $\mathcal{I}_V=\prod_{i=1}^{|V|}[r_i]$
is said to be obeying the {\it global Markov property} relative
to an undirected graph $\mathcal{G}$, if for any triple
$(F,F',S)$ of disjoint subsets of $V$ such that $S$ separates
$F$ from $F'$ in $\mathcal{G}$, $X_F$ and $X_{F'}$ are
conditionally independent given $X_S$, which will be written as
\begin{equation}
  X_F \perp\!\!\!\perp X_{F'} | X_S. \label{markov}
\end{equation}
The global Markov property is represented by
the interaction graph $\mathcal{G}(\Gamma)$.

Sundberg \cite{Sun75} established the following $A$-hypergeometric
sum formula of parameter $y=1$ for decomposable graphical models
by using factorization of the probability measure of $X$
(see Subsection 4.4.1 of \cite{Lau94}).

On the other hand, it is known that the $A$-hypergeometric integral
\begin{equation}
  \int_\gamma
  \left(\sum_{j\in[m]}\prod_{i\in[d]} y_jt_i^{a_{ij}}\right)^n
  \prod_{i\in[d-1]}t_i^{-b_i-1} dt_i, \quad t_d=1,
  \label{GKZint}
\end{equation}
for a $(d-1)$-cycle $\gamma$ in
$\mathbb{C}^{*d}$ \cite[Theorem 2.7]{GKZ90}, \cite[Theorem 5.4.2]{SST00},
where $\mathbb{C}^*:=\mathbb{C}\setminus\{0\}$
and $n=\deg(Z_A(b;y))$, is annihilated by the $A$-hypergeometric
ideal $H_A(b)$. The integration at $y=1$ yields \eqref{GHTdec}.
In Appendix we will see such a derivation. See Appendix for
the definitions of the separator and the perfect sequence.

\begin{theorem}\label{theo:dec}
  Consider a decomposable graphical model associated with
  the simplicial complex $\Gamma$. The $A$-hypergeometric
  polynomial with parameters $y=1$ associated with the matrix
  $A_\Gamma$ in \eqref{Ahie} and the vector
  $b_\Gamma=u(i_C:i_C\in\mathcal{I}_C,C\in\mathcal{C})$ has
  an expression
  \begin{align}
    Z_{A_\Gamma}(b_\Gamma;1)&=
    \sum_{\{v(i_V):\sum_{i_V{\subset} i_C}v(i_V)
            =u(i_C),\forall C\in\mathcal{C}\}}
    \prod_{i_V\in\mathcal{I}_V}\frac{1}{v(i_V)!}\nonumber\\
    &=\frac{\prod_{S\in\mathcal{S}}
    \{\prod_{i_S\in\mathcal{I}_S}u(i_S)!\}^{\nu(S)}}
    {\prod_{C\in\mathcal{C}}\prod_{i_C\in\mathcal{I}_C}u(i_C)!},
    \label{GHTdec}
  \end{align}
  where $\mathcal{C}$ is the set of cliques of the interaction
  graph $\mathcal{G}(\Gamma)=(V,E)$, $\mathcal{S}$ is the set
  of separators with multiplicities $\nu$ in any perfect
  sequence, and $v(i_V)$ are variables, which takes values in
  $\mathbb{N}_0$, indexed by the elements of $\mathcal{I}_V$.
\end{theorem}

The transition probability of the Markov chain run by
Algorithm~\ref{algo:0} immediately follows by Theorem~\ref{theo:dec}.

\begin{corollary}
  Consider a decomposable graphical model associated with
  the simplicial complex $\Gamma$. The transition probability of
  the Markov chain run by Algorithm~\ref{algo:0} is given by
  \begin{equation}
    \mu(b_\Gamma,b_\Gamma-{a_{\Gamma}}_j;1)
    =\frac{\prod_{C\in\mathcal{C}}
    \prod_{\{i_C:i_C\in\mathcal{I}_C,i_C{\supset} i_j\}}u(i_C)}
    {\prod_{S\in\mathcal{S}}
    \{\prod_{\{i_S:i_S\in\mathcal{I}_S,i_S{\supset} i_j\}}
           {u(i_S)}\}^{\nu(S)}}, \quad \forall j\in[m],
    \label{trans}
  \end{equation}
  where ${a_{\Gamma}}_j$ is the $j$-th column vector of
  $A_\Gamma$ and $i_j$ is the index of the state specified
  by the $j$-th column vector.
  %${\rm Pos}(i_j)=j$, $i_j\in\mathcal{I}_V$.
\end{corollary}

\begin{example}\label{exam:4cyclet}
  Consider a binary decomposable graphical model with the
  simplicial complex $\Gamma=[123][124]$. The matrix
  is $A_{\Gamma}=\left(
    \begin{array}{c}
    E_8\otimes 1_2\\
    E_4\otimes 1_2\otimes E_2\\
    \end{array}
    \right)$ and the vector $b_{\Gamma}$ is
  \begin{align*}
    b_{\Gamma}=
    (&u_{111\cdot},u_{112\cdot},u_{121\cdot},u_{122\cdot},
      u_{211\cdot},u_{212\cdot},u_{221\cdot},u_{222\cdot},\\
     &u_{11\cdot1},u_{11\cdot2},u_{12\cdot1},u_{12\cdot2},
      u_{21\cdot1}, u_{21\cdot2}, u_{22\cdot1},u_{22\cdot2})^\top,
  \end{align*}
  where the columns of $A_\Gamma$ specify the joint states in
  \[
  \mathcal{I}_V=\{1111, 1112, 1121, 1122, 1211,\ldots, 2222\}.
  \]
  The separator is $\{1,2\}$. Consider sampling from
  the model by Algorithm~\ref{algo:0}. When we
    pick the cell specified by the first column vector, $j=1$,
  $(111\cdot),(11\cdot1),(11\cdot\cdot){\supset}(1111)=i_1$,
  and the transition probability of the Markov chain run
  by Algorithm~\ref{algo:0} given by \eqref{trans} becomes
  \[
  M(b_\Gamma,b_\Gamma-a_{\Gamma_1})=\frac{\mu(b_\Gamma,b_\Gamma-a_{\Gamma_1};1)}
       {u_{\cdot\cdot\cdot\cdot}}=
  \frac{u_{111\cdot}u_{11\cdot1}}{u_{11\cdot\cdot}
        u_{\cdot\cdot\cdot\cdot}}.
  \]
  On the other hand,
  since this decomposable graphical model has the global Markov property \eqref{markov} with $X_3 \perp\!\!\!\perp X_4|X_{\{1,2\}}$, we have
  \begin{align*}
    \mathbf{P}(X=(1,1,1,1))=&
    \mathbf{P}(X_3=1|X_1=X_2=1)\mathbf{P}(X_4=1|X_1=X_2=1)\\
    &\times\mathbf{P}(X_1=X_2=1)\\
    =&\frac{u_{111\cdot}}{u_{11\cdot\cdot}}
      \frac{u_{11\cdot1}}{u_{11\cdot\cdot}}
      \frac{u_{11\cdot\cdot}}{u_{\cdot\cdot\cdot\cdot}}
     =\frac{u_{111\cdot}u_{11\cdot1}}
      {u_{11\cdot\cdot}u_{\cdot\cdot\cdot\cdot}}.
  \end{align*}
  Therefore we may say that
  Algorithm~\ref{algo:0} provides a sampling utilizes this
  conditional independence.
\end{example}

\section{Examples}
\label{sect:examp}

This section is devoted to demonstrate the direct sampling from
toric models. The algorithm was implemented and the performance
for the univariate Poisson regression model and two-way contingency
tables are examined. The no-$l(\ge 3)$-way interaction model,
which is an important log-linear model, is also discussed.

Direct samplings and approximate samplings with the Metropolis chain
were implemented on a computer algebra system Risa/Asir version
\cite{NT92}. The following timing results were taken
on a CPU (Intel Core i5-4308U CPU, 2.80GHz) of a machine with
8GB memory. A single core was used unless otherwise noted.
Since we are interested in evaluation of performance
of algorithms without numerical errors, the following
computations were conducted in the rational number arithmetic.

A comparison of performance between a direct sampling and
an approximate sampling resorting the use of a Metropolis
chain is nontrivial, because steps of a Metropolis chain are
not independent. To account for the autocorrelation among
steps of a chain, we employed the notion of the effective
sample size. The effective sample size of $N$
steps is defined as $N/(1+2\sum^\infty_{t=1}\rho_t)$, where
$\rho_t$ is the autocorrelation of chi-squares at lag $t$.
The estimate was based on the sample autocorrelation,
and the sum was cut at the lag such that the sample
autocorrelation was larger than some small positive value,
where 0.05 was chosen.

The computational complexity of simulating
a Metropolis chain of random vectors of length $N$ is
$O(\max\{U,|\mathcal{B}|\}N)$, where $|\mathcal{B}|$ is
the size of a Markov basis $\mathcal{B}$ and $U$ is the number
of elements to be updated at each step of the chain.
The cost of $O(|\mathcal{B}|)$ comes from the choice of
an element from $|\mathcal{B}|$ elements.

\subsection{Univariate Poisson regressions}

Consider the univariate Poisson regression model, that is,
random variables $U_j$, $j\in[m]$ independently follow
the Poisson distribution with mean $\alpha+\beta j$, where
$\alpha$ and $\beta$ are the nuisance parameters.
The conditional distribution given the sufficient statistics
$b_1=\sum_{j=1}^m ju_j$ and $b_2=\sum_{j=1}^m u_j$ is
\eqref{cond} with
\begin{equation*}
A=\left(\begin{array}{cccc}
  1 & 2 & \cdots & m\\
  1 & 1 & \cdots & 1
\end{array}\right)
\label{A2row}
\end{equation*}
and $y=1$. An explicit expression of the Pfaffian system was
obtained by using a recurrence relation of the partial Bell
polynomials \cite{Man17}. Since $\text{vol}(A)=m-1$ and
$\deg(Z_A(b;y))=b_2$, if the $A$-hypergeometric sum formulae are
known, the computational complexity of the direct sampling
by Algorithm~\ref{algo:0} is $O(2mb_2)$. Otherwise, that by
Algorithm~\ref{algo:1} is $O(m^2b_2)$. A minimal Markov basis is
\[
  \mathcal{B}=\{e_i+e_j-e_{i+1}-e_{j-1}:1\le i<j\le m,i+2\le j\},
\]
where $|\mathcal{B}|=O(m^4)$. Proposition~\ref{prop:cost}
suggests that the computational complexities of the direct
sampling by Algorithms~\ref{algo:0} and \ref{algo:1} are
asymptotically smaller than that of a single step of
the Metropolis chain, if $m$ is sufficiently larger than
$\sqrt{b_1}$.

If $m\ge b_1-b_2+1$, the $A$-hypergeometric polynomial has
the closed form $(b_1-1)!/\{(b_2-1)!(b_1-b_2)!b_2!\}$, or
the $A$-hypergeometric sum formula, which is the signless Lah
number with divided by $b_1!$. Otherwise, no closed formula is
available. Nevertheless, the $A$-hypergeometric polynomials on
all the elements of the Markov lattice can be computed
efficiently by using the recurrence relation for the partial Bell
polynomials mentioned above. In fact, there exists bijection
between the partial Bell polynomials
$B_{b_1b_2}(\cdot!)=b_1!Z_A(b;1)$ and the elements
$(b_1,b_2)^\top$ of the Markov lattice, and computing the
partial Bell polynomials $B_{b_1b_2}(\cdot!)$ from the partial
Bell polynomials
$B_{0b_2}=\delta_{b_20}$ by using the recurrence relation
is equivalent to computing the hypergeometric polynomials of
all elements of the Markov lattice with the maximum
$(b_1,b_2)^\top$ from the minimum $0$. Therefore, we do not
need Algorithm~\ref{algo:1}.

As a benchmark problem, we considered the case of
$m=5$, $b=(288,120)^\top$, whose approximate sampling by the
Metropolis chain was discussed by Diaconis et al. \cite{DES98}.
Following \cite{DES98}, metropolis chains of 9,000 steps with
1,000 burn-in steps were generated. The average computation time
and the average effective sample size for a chain based on 100
trials were 10.72 seconds and 1,977.3, respectively. Computation of
the $A$-hypergeometric polynomials for all the elements of
the Markov lattice was 8.92 seconds. Then, we conducted
100 trials of generation of 1,977 tables by using
Algorithm~\ref{algo:0}. The average computation time was 5.50
seconds. The direct sampling by Algorithm~\ref{algo:0}
is more efficient than the Metropolis chain. We have little
reason to use the Metropolis chain for the problem.

\subsection{Two-way contingency tables}

Two-way contingency tables were discussed in Subsection~\ref{subs:2}.
In this subsection, we consider models with and without independence
of rows and columns. For an $A$-hypergeometric system associated
with two-way contingency tables, an explicit expression for
the Pfaffian system was obtained via computation of intersection
forms of the twisted cohomology groups in associated hypergeometric
integral \cite{GM18}. For an $r_1\times r_2$-table,
we have $m=r_1r_2$,
\[
  r=\text{vol}(A)
  =\left(\begin{array}{c}r_1+r_2-2\\r_1-1\end{array}\right)
\]
(see Subsection 3.6 of \cite{AK11}), and
$n=\deg(Z_A(b;y))=u_{\cdot\cdot}$. Goto and Matsumoto \cite{GM18}
discussed evaluation of the Gauss--Manin vector using the matrix
multiplication \eqref{recQ}, and the procedure was summarized as
Algorithm~7.8 in \cite{GM18}. As for Algorithm~\ref{algo:1},
Step 3 is their Corollary~7.1, and the update of the Gauss--Manin
vector in Step 6 is their Corollary~6.3. See Tachibana et al.
\cite{TGKT20} for explicit expressions for the binary two-way
contingency table. Moreover, Tachibana et al. discussed
an efficient implementation of the matrix multiplication
\eqref{recQ} by the modular method in computational algebra.
According to Proposition~\ref{prop:cost}, the computational complexity
of the direct sampling by Algorithm~\ref{algo:1} is
$O(\max\{r_1r_2,r\}rn)$,
%A formula in Corollary~7.2 of \cite{GM18}
%implies that Step 2 is computed with a cost of $O(r_1^2r_2^2)$,
%which leads to the complexity of $O(\max\{r_1^2r_2^2,r^2\}n)$.
%In either case,
where $r^2n$ dominates for large $r$, because $r$ grows
rapidly than $r_1r_2$. For example,
$(r_1r_2,r)=(4,2),(9,6),(16,20),(25,70)$ for $r_1=r_2=2,3,4,5$,
respectively. The unique minimal Markov basis up to sign is
$\mathcal{B}=(z_{ij}:i\in[r_1],j\in[r_2])$, where
\begin{equation}
  z_{ij}=\left\{
  \begin{array}{rl}
  +1& ~(i,j)=(i_1,j_1),(i_2,j_2),\\
  -1& ~(i,j)=(i_1,j_2),(i_2,j_1),\\
   0& ~(i,j)=\text{others},
  \end{array}
  \right.\qquad 1\le i_1<i_2\le r_1, \,\, 1\le j_1<j_2\le r_2,
  \label{MB2}
\end{equation}
and $|\mathcal{B}|=O(r_1^2r_2^2)$. Roughly speaking, if
$r_1$ and $r_2$ are fixed, the ratio of computational
complexities of the direct sampling by Algorithm~\ref{algo:1}
to that of a single step of the Metropolis chain scales with
$n$.

Direct samplings of two-way contingency tables by
Algorithms~\ref{algo:0} and \ref{algo:1}, and by
the Metropolis chain were implemented. The implementation
of Algorithm~\ref{algo:1} is published as the package
{\tt gtt\_ds} for Risa/Asir. As a benchmark problem, we
considered sampling $3\times 4$ tables whose marginal sums
given by
\[
  \begin{array}{cccc|c}
    & &  &  &10\\
    & &  &  &14\\
    & &  &  &26\\
    \hline
    6&9&15&20&
  \end{array}.
\]
For the independence model, 100 Metropolis chains of 10,000
steps with 1,000 burn-in steps were generated. We do not need
Algorithm~\ref{algo:1}, because we know the transition
probability \eqref{urn}. The average
computation time and the average effective sample size for
a chain were 1.80 seconds and 425.1, respectively. Then, we
conducted 100 trials of generation of 425 tables by using
Algorithm~\ref{algo:0}. The average computation time was
0.21 seconds. We may say that the direct sampling by
Algorithm~\ref{algo:0} is nine times efficient than
the Metropolis chain.

As a model without independence, we
considered parameters
\[
  (z_{ij})=\left(
  \begin{array}{cccc}
    1/2&1/11&1/13&1\\
    1/7&1/3 &1/5 &1\\
    1  &1   &1   &1\\
  \end{array}
  \right).
\]
We generated 100 Metropolis chains of 10,000 steps with 1,000
burn-in steps. The average computation time and the average
effective sample size for a chain were 1.96 seconds and 280.9,
respectively. Computing the $A$-hypergeometric polynomials
for all the elements of the Markov lattice is practically
impossible, because the number of the elements is huge.
We conducted 10 trials of generation of 281 tables by using
Algorithm~\ref{algo:1}. The average computation time was
1645.6 seconds. The same computation was performed in 693.6
seconds by using four cores. Currently, our implementation of 
Algorithm~\ref{algo:1} is far inefficient than the Metropolis
chain. However, the fact that tables generated by
Algorithm~\ref{algo:1} follow independently to the exact
distribution is a remarkable advantage, since the computation
can be performed in parallel. The timing is
practically reasonable if we use multiple cores of a CPU.

\begin{remark}\label{rema:mixing}
  Propp and Wilson proposed the coupling from the past (CFTP)
  algorithm, which guarantees that a sample taken from
  an ergodic Markov chain is a sample taken from the unique
  stationary distribution \cite{PW96}. The CFTP is another
  algorithm for taking samples exactly following the distribution
  which we need. The waiting time for taking the sample,
  called the coalescence, determines the performance. 
  The mixing time of an ergodic Markov chain is defined as
  \[
    t_{\text{mix}}(\epsilon):=\min\{t:
    \max_{x\in\mathcal{X}}\|M^t(x,\cdot)-\pi\|_{\text{TV}}<\epsilon\}
  \]
  for some $\epsilon>0$, where $\|\cdot \|_{\text{TV}}$ is
  the total variation distance, $M$ is the transition matrix,
  and $\pi$ is the stationary distribution. For sampling
  two-row ($r_1=2$) contingency tables from the uniform
  distribution, not the conditional distribution given
  marginal sums, Dyer and Greenhill \cite{DG00} showed that
  the upper bound of the mixing time of a Markov chain with
  moves of multiples of the Markov basis \eqref{MB2} at each
  step is $O(r_2^2\log n)$. By using the result, Kijima and
  Matsui \cite{KM06} constructed a CFTP algorithm
  \cite{PW96}, whose expected waiting time of the coalescence
  is $O(r_2^3\log n)$. For two-row contingency tables from
  the uniform distribution, their CFTP has smaller computational
  complexity than Algorithm~\ref{algo:1}. However, note that
  the path coupling and the estimate of the waiting times
  rely on specific properties of two-row contingency
  tables and the uniform distribution.
\end{remark}

\subsection{No-$l(\ge 3)$-way interaction models}

For an $l$-way contingency table
$(u_{i_1\ldots i_l}:i_1\in[r_1],\ldots,i_l\in[r_l])$,
the no-$l$-way interaction model is the hierarchical
log-linear model associated with the simplicial complex
$[1\cdots l]\setminus\{1,\ldots,l\}$. For example,
$[12]\setminus\{1,2\}$ is the simplicial complex $[1][2]$,
which is the independence model of two-way contingency
tables discussed in Subsection~\ref{subs:2}, and
$[123]\setminus\{1,2,3\}$ is the simplicial complex
$[12][23][31]$. If $l\ge 3$, the model is not graphical.
%Note that if the parameters $\psi\neq 1$, there exists
%$l$-way interactions.
The classical univariate hypergeometric
polynomial \eqref{chyp} appears as the $A$-hypergeometric
polynomial in the binary no-$l$-way interaction model. 
For example, the hypergeometric polynomial $_4F_3$
appears in the binary no-3-way interaction model of
$2\times2\times2$ contingency tables, and the hypergeometric
polynomial $_8F_7$ appears in the binary no-4-way
interaction model of $2\times2\times2\times2$ contingency
tables.

Let us recall the following useful fact. The classical univariate
hypergeometric series \eqref{chyp} can be represented as
the $A$-hypergeometric series (Example 5.4.6 of \cite{SST00}),
while the classical univariate hypergeometric series satisfies
the following ordinary differential equation, see, e.g.,
Section 4.2 of \cite{Erd53}:
\begin{align}
  \left\{\prod_{i\in[k]}(\theta+d_i-1)-z\prod_{i\in[k]}(\theta+c_i)
  \right\}\bullet {}_kF_{k-1}(c_1,\ldots,c_k;d_2,\ldots,d_k;&z)=0,
  \nonumber\\
  k\ge 2,&\label{cl_rec0}
\end{align}
where $d_1=1$ and $\theta=z\partial_z$. This ordinary differential
equation yields the following recurrence relation:
\begin{align}
  \theta^k=&\frac{\sum_{i\in[k]}d_i-k-z\sum_{i\in[k]}c_i}{z-1}
  \theta^{k-1}\nonumber\\
  &+
  \frac{\sum_{2\le i<j\le k}(d_i-1)(d_j-1)-z\sum_{1\le i<j\le k}
  c_ic_j}{z-1}\theta^{k-2}\nonumber\\
  &+\cdots
  +\frac{\prod_{2\le i\le k}(d_i-1)-z\sum_{i\in[k]}\prod_{j\neq i}c_j}
  {z-1}\theta-\frac{z\prod_{i\in[k]}c_i}{z-1}
  \label{cl_rec1}
\end{align} 
on ${}_kF_{k-1}(c;d;z)$, where $z\neq 1$. For $z=1$, we have
\begin{align}
  \theta^{k}=&
  \frac{\sum_{2\le i<j\le k}(d_i-1)(d_j-1)-\sum_{1\le i<j\le k}c_ic_j-\sum_{i\in[k]}c_i}{k+1+\sum_{i\in[k]}(c_i-d_i)}\theta^{k-1}+\cdots\nonumber\\
  &-\frac{\sum_{i\in[k]}\prod_{j\neq i}c_j+\prod_{i\in[k]}c_i}
  {k+1+\sum_{i\in[k]}(c_i-d_i)}\theta
  -\frac{\prod_{i\in[k]}c_i}{k+1+\sum_{i\in[k]}(c_i-d_i)},
  \label{cl_rec2}
\end{align} 
on ${}_kF_{k-1}(c;d;1)$, where $\sum_{i\in[k]}(d_i-c_i)\neq k+1$.
Here, \eqref{cl_rec2} is obtained by applying $\theta$ to
\eqref{cl_rec0} from the left, and $\theta^i \bullet {}_kF_{k-1}(c;d;1)$
should be read as
\[
  \theta^i \bullet {}_kF_{k-1}(c;d;1)=
  \left\{\left(z\frac{\partial}{\partial z}\right)^i{}_kF_{k-1}(c;d;z)
  \right\}_{z=1}.
\]
These recurrence relations are useful to obtain Pfaffian systems
for univariate $A$-hypergeometric polynomials.
In addition, the initial Gauss--Manin vector of an univariate
$A$-hypergeometric polynomial can be evaluated efficiently by
the binary splitting algorithm, see, e.g., \cite{Mez10}.

A joint state of a binary $l$-way contingency table is
$i_V=(i_1\ldots i_l)$, $i_1\in[2],\ldots,i_l\in[2]$ with the joint
state space $\mathcal{I}_V=[2]^l$. A contingency table is
the set of $u(i_V)=u(i_1\ldots i_l)=u_{i_1\ldots i_l}$, $i_V\in[2]^l$.
  
\begin{proposition}\label{prop:no-l-way}
  The normalization constant of the conditional distribution
  of the binary no-$l(\ge 2)$-way interaction model is
  proportional to the classical univariate hypergeometric
  polynomial \eqref{chyp} with $k=2^{l-1}$.
\end{proposition}  

\begin{proof}
  The joint state space $\mathcal{I}_V=[2]^l$ can be identified
  with the hypercube and the sufficient statistics are the facets.
  For each state $i_V\in[2]^l$, there exists a shortest path to
  the state $(1\cdots 1)$ whose length is the number of coordinates
  of $i_V$ occupied by 2. Then, $u(i_V)$ can be expressed by
  an alternating sum of sufficient statistics and $u_{1\cdots 1}$.
  For example, the state $(1212)$ has a path
  $(1212)\to(1112)\to(1111)$, which can expressed as
  \[
    u_{1212}=u_{1\cdot12}-u_{1112}=u_{1\cdot12}-u_{111\cdot}
    +u_{1111}.
  \]
  For such an expression, $u(i_V)!$ can be represented by
  the Pochhammer symbol with subscript $u_{1\cdots 1}$, and
  $y_{i}^{u}$ can be represented by power of $y_{1\cdots1}$.
  For $u_{1212}!$,
  \[
    u_{1212}!
    =(u_{1\cdot12}-u_{111\cdot})!
    (u_{1\cdot12}-u_{111\cdot}+1)_{u_{1111}}
  \]
  and $y_{1212}^{u_{1212}}\propto y_{1212}^{u_{1111}}$.
  For the state $(1112)$, the unique shortest path to $(1111)$ gives
  \[
    u_{1112}!=(u_{111\cdot}-u_{1111})!=\frac{u_{111\cdot}!}
    {(-u_{111\cdot})_{u_{1111}}}(-1)^{u_{1111}}
  \]
  and $y_{1222}^{u_{1222}}\propto y_{1222}^{-u_{1111}}$.
  These observations lead to 
  \begin{equation}
    \frac{y^u}{u!}\propto
    \frac{\prod_{\{i:i\in[2]^l,~\text{number~of}~1~\text{is~odd}\}}
    (c_i)_{u_{1\cdots1}}}
    {\prod_{\{i:i\in[2]^l,~\text{number~of}~1~\text{is~even}\}}
    (d_i)_{u_{1\cdots1}}}
    \left(
    \frac{
    \prod_{\{i:i\in[2]^l,~\text{number~of}~1~\text{is~even}\}}y_i}
    {\prod_{\{i:i\in[2]^l,~\text{number~of}~1~\text{is~odd}\}}y_i}
    \right)^{u_{1\cdots1}},
    \label{toric2n}     
  \end{equation}
  which is the form of the summand of the classical univariate
  hypergeometric polynomial \eqref{chyp} with $k=2^{l-1}$ and
  $u=u_{1\cdots1}$. 
\end{proof}

Let $A^{(2^l)}$ denotes the configuration matrix. The above proof
gives some insights of the toric ideal and the $A$-hypergeometric
ideal generated by $A^{(2^l)}$. The expression \eqref{toric2n}
shows that the toric ideal is the principal ideal generated by
the binomial
\[
  \prod_{\{i:i\in[2]^l,~\text{number~of}~1~\text{is~even}\}}\partial_i
 -\prod_{\{i:i\in[2]^l,~\text{number~of}~1~\text{is~odd}\}}\partial_i,
\]
which is the unique minimal Markov basis up to sign.
The recurrence relation \eqref{cl_rec1} shows that
the holonomic rank of $A$-hypergeometric ideal is $2^{l-1}$
and $n={\rm deg}(Z_{A^{(2^l)}}(b;y))=u_{\ldots}$.
According to Proposition~\ref{prop:cost}, computational complexity
of the direct sampling by Algorithm~\ref{algo:1} is $O(2^{2l-1}n)$,
while that of the Metropolis chain of length $N$ is $O(2^lN)$.
By using the recurrence relation \eqref{cl_rec1}, the explicit
form of the Pfaffian system can be obtained immediately.
The case of $l=3$ is as follows.

\begin{example}\label{exam:3way}  
  Consider the binary no-three-way interaction model associated
  with the simplicial complex $[12][13][23]$. Let
  \[
    A^{(2^3)}=
    \left(
    \begin{array}{c}
    E_{2^2} \otimes 1_2\\
    E_2 \otimes 1_2 \otimes E_2\\ 
    1_2 \otimes E_{2^2}
    \end{array}
    \right), ~
    b^\top=(
    u_{11\cdot},u_{12\cdot},u_{21\cdot},u_{22\cdot},
    u_{1\cdot1},\ldots,u_{2\cdot2},u_{\cdot11},\ldots,u_{\cdot22}),
  \]
  where the columns of $A^{(2^3)}$ specify the joint states in
  \[
  \mathcal{I}_V=\{111,112,121,122,211,\ldots,222\}.
  \]
  The conditional distribution of $u=u_{111}$ given marginal sums is
  \begin{align*}
    \mathbf{P}(U=u|A^{(2^3)} u=b)
    =\frac{1}{Z_{A^{(2^3)}}(b;y)}\frac{y^u}{u!}
    =\frac{1}{{}_4F_3(c;d;z)}
    \frac{(c_1)_u(c_2)_u(c_3)_u(c_4)_u}
    {(d_2)_u(d_3)_u(d_4)_u}\frac{z^u}{u!},
  \end{align*}
  where
  \begin{align*}
  &c=(-u_{11\cdot},-u_{\cdot11},-u_{1\cdot1},
      -u_{\cdot\cdot\cdot}+u_{1\cdot\cdot}+u_{\cdot1\cdot}
      +u_{\cdot\cdot1}-u_{11\cdot}-u_{\cdot11}-u_{1\cdot1}),\\
  &d=(1,u_{1\cdot\cdot}-u_{11\cdot}-u_{1\cdot1}+1,
      u_{\cdot1\cdot}-u_{11\cdot}-u_{\cdot11}+1,
      u_{\cdot\cdot1}-u_{1\cdot1}-u_{\cdot11}+1),
  \end{align*}
  and
  \[
  z=\frac{y_{111}y_{122}y_{212}y_{221}}{y_{112}y_{121}y_{211}y_{222}}.
  \]
  Therefore,
  \[
    Z_{A^{(2^3)}}(b;y)=cy^s{}_4F_3(c_1,c_2,c_3,c_4;d_2,d_3,d_4;z),
  \]
  where  
  \[
  c=\frac{1}{(-c_1)!(-c_2)!(-c_3)!(-c_4)!d_2!d_3!d_4!}, \quad
  y^s=\frac{y_{122}^{d_2-1}y_{212}^{d_3-1}y_{221}^{d_4-1}}
    {y_{112}^{c_1}y_{121}^{c_2}y_{211}^{c_3}y_{222}^{c_4}}.
  \]
  To avoid messy expressions with many subscripts, let
  \[
  (y_1,y_2,y_3,y_4,y_5,y_6,y_7,y_8)=
  (y_{111},y_{112},y_{122},y_{121},y_{212},y_{211},y_{221},y_{222}).
  \]
  The toric ideal of the matrix $A^{(2^3)}$ is the principal ideal
  generated by the binomial 
  $\partial_1\partial_3\partial_5\partial_7-\partial_2\partial_4\partial_6\partial_8$,
  and the set of the standard monomials in reverse lexicographic
  term order with $\partial_1\succ\partial_2\succ\cdots$ is
  $\{1,\theta_8,\theta_8^2,\theta_8^3\}$, where
  $\theta_1=\theta_3=\theta_5=\theta_7=\theta$ and
  $\theta_2=\theta_4=\theta_6=\theta_8=-\theta$.
  The Gauss--Manin vector is
  $(1,\theta_8,\theta^2_8,\theta^3_8)Z_{A^{(2^3)}}(b;y)$.
  The Pfaffian system is given in Appendix, where we assume $u_{\cdot\cdot\cdot}\ge 2$.
\end{example}

If a level is larger than two, the associated $A$-hypergeometric
polynomial becomes much more complicated. Consider the
no-three-way interaction model with $r_1=r_2=3$ and $r_3=2$. Let
the matrix
\[
  A^{(332)}=\left(
  \begin{array}{c}
    E_{3^2}\otimes 1_2\\
    E_3\otimes 1_3\otimes E_2\\
    1_3\otimes E_{3\times2}
  \end{array}\right), \quad
  b^{(332)\top}=(
  u_{11\cdot},\ldots,u_{33\cdot},u_{1\cdot1},\ldots,u_{3\cdot2},
  u_{\cdot11},\ldots,u_{\cdot32}),
\]
where the columns of $A^{(332)}$ specify the joint states in
\[
\mathcal{I}_V=\{111, 112, 121, 122, 131, 132, 211, \ldots, 332\}.
\]
The model can be represented by the following three tables:
\begin{equation*}
  \begin{array}{ccc|c}
  u_{111}    &u_{121}    &u_{131}    &u_{1\cdot1}\\
  u_{211}    &u_{221}    &u_{231}    &u_{2\cdot1}\\
  u_{311}    &u_{321}    &u_{331}    &u_{3\cdot1}\\
  \hline
  u_{\cdot11}&u_{\cdot21}&u_{\cdot31}&\\
  \end{array},\quad
  \begin{array}{ccc|c}
  u_{112}    &u_{122}    &u_{132}    &u_{1\cdot2}\\
  u_{212}    &u_{222}    &u_{232}    &u_{2\cdot2}\\
  u_{312}    &u_{322}    &u_{332}    &u_{3\cdot2}\\
  \hline
  u_{\cdot12}&u_{\cdot22}&u_{\cdot32}&\\
  \end{array},\quad
  \begin{array}{ccc|c}
  u_{11\cdot}    &u_{12\cdot}    &u_{13\cdot}    &~~~\\
  u_{21\cdot}    &u_{22\cdot}    &u_{23\cdot}    &~~~\\
  u_{31\cdot}    &u_{32\cdot}    &u_{33\cdot}    &~~~\\
  \hline
  &&&\\
  \end{array}.
\end{equation*}
Since the dimension of the kernel of $A^{(332)}$ is four, we take
$(u_{111},u_{121},u_{211},u_{221})$ as the independent variables.
Then,
\begin{align*}
  &u_{112}=u_{11\cdot}-u_{111}, \quad
   u_{122}=u_{12\cdot}-u_{121}, \quad
   u_{212}=u_{21\cdot}-u_{211}, \quad
   u_{222}=u_{22\cdot}-u_{221}, \\
  &u_{131}=u_{1\cdot1}-u_{111}-u_{121}, \quad
   u_{231}=u_{2\cdot1}-u_{211}-u_{221}, \quad
   u_{311}=u_{\cdot11}-u_{111}-u_{211}, \\
  &u_{321}=u_{\cdot21}-u_{121}-u_{221},\quad
   u_{331}=u_{3\cdot1}-u_{\cdot11}-u_{\cdot21}
   +u_{111}+u_{121}+u_{211}+u_{221},\\
  &u_{132}=u_{13\cdot}-u_{1\cdot1}+u_{111}+u_{121},\quad
   u_{232}=u_{23\cdot}-u_{2\cdot1}+u_{211}+u_{221},\\
  &u_{312}=u_{31\cdot}-u_{\cdot11}+u_{111}+u_{211},\quad
   u_{322}=u_{32\cdot}-u_{\cdot21}+u_{121}+u_{221},\\  
  &u_{332}=u_{33\cdot}-u_{3\cdot1}+u_{\cdot11}+u_{\cdot21}
   -u_{111}-u_{121}-u_{211}-u_{221}.
\end{align*}
The expression
\begin{align*}
y^u\propto&
\left(\frac{y_{111}y_{331}y_{132}y_{312}}{y_{131}y_{311}y_{112}y_{332}}\right)^{u_{111}}
\left(\frac{y_{121}y_{331}y_{132}y_{322}}{y_{131}y_{321}y_{122}y_{332}}\right)^{u_{121}}
\left(\frac{y_{211}y_{331}y_{232}y_{312}}{y_{231}y_{311}y_{212}y_{332}}\right)^{u_{211}}\\
&\times
\left(\frac{y_{221}y_{331}y_{232}y_{322}}{y_{231}y_{321}y_{222}y_{332}}\right)^{u_{221}}
=:z_{111}^{u_{111}}z_{121}^{u_{121}}z_{211}^{u_{211}}z_{221}^{u_{221}}
\end{align*}
implies that a generator of the toric ideal of $A^{(332)}$ contains
the following four binomials
\begin{align*}
&\partial_{111}\partial_{331}\partial_{132}\partial_{312}-
 \partial_{131}\partial_{311}\partial_{112}\partial_{332},\quad
 \partial_{121}\partial_{331}\partial_{132}\partial_{322}-
 \partial_{131}\partial_{321}\partial_{122}\partial_{332},\\
&\partial_{211}\partial_{331}\partial_{232}\partial_{312}-
 \partial_{231}\partial_{311}\partial_{212}\partial_{332},\quad
 \partial_{221}\partial_{331}\partial_{232}\partial_{322}-
 \partial_{231}\partial_{321}\partial_{222}\partial_{332},
\end{align*}
where $\partial_{ijk}=\partial/\partial y_{ijk}$. Diaconis and
Sturmfels \cite{DS98} called moves represented by quartic binomials
basic $2\times2\times2$ moves.
By the permutation of the indices of the above four binomials,
we have nine basic $2\times2\times2$ moves. In addition, we have
six moves of degree six, which appear in $y^u$ if we choose other
sets of independent variables. In total, there are 15 moves up to
sign and they form a minimal Markov basis. The normalized volume of
$A^{(332)}$, $\text{vol}(A^{(332)})$, was computed with the aid of the software {\tt polymake}
\cite{GJ00}, and the result was 81. Thus, the holonomic rank of
the $A$-hypergeometric ideal is 81. It is straightforward to see that
\begin{align*}
  &Z_{A^{(332)}}(b^{(332)};y)\propto\\&
  \sum_{u\in\mathcal{F}_{A^{(332)}}(b^{(322)})}
  \frac{
  \prod_{i\in[2],j\in[2]}
  (-u_{ij\cdot})_{u_{ij\cdot}-u_{ij2}}    
  \prod_{j\in[2]}(-u_{\cdot j1})_{u_{\cdot j1}-u_{3j1}}    
  \prod_{i\in[2]}(-u_{i\cdot1})_{u_{i\cdot 1}-u_{i31}}}
  {\prod_{i\in[2]}(-u_{i\cdot\cdot}+u_{i\cdot2}+u_{i3\cdot}+1)_{\sum_{j\in[2]}u_{ij1}}}\\
  &\times
  \frac{(u_{\cdot\cdot\cdot}+u_{3\cdot\cdot}+u_{\cdot3\cdot}+u_{\cdot\cdot2}-u_{33\cdot}-u_{3\cdot2}-u_{\cdot32})_{\sum_{i\in[2],j\in[2]}u_{ij1}}}
  {\prod_{j\in[2]}(-u_{\cdot j\cdot}+u_{\cdot j2}+u_{3j\cdot}+1)_{\sum_{i\in[2]}u_{ij1}}
  (-u_{\cdot\cdot1}+u_{3\cdot1}+u_{\cdot31}+1)_{\sum_{i\in[2],j\in[2]}u_{ij1}}}\\
  &\times\frac{z_{111}^{u_{111}}z_{121}^{u_{121}}z_{211}^{u_{211}}z_{221}^{u_{221}}}{u_{111}!u_{121}!u_{211}!u_{221}!},
\end{align*}
but an implementation of Algorithm~\ref{algo:1} seems prohibitive.

\section{Discussion}

This paper describes an interplay between the direct sampling
algorithm and the theory of hypergeometric functions of several
variables. The direct sampling algorithm proposes interesting
classes of hypergeometric functions and also the graphical
toric model gives an interesting new formula of hypergeometric
polynomials. On the opposite side, studies of contiguity relations
of hypergeometric functions will give an efficient direct sampler.
There are general algorithms to derive the relations, but they are
not efficient. New algorithms and theories for hypergeometric
functions are expected.

After this research was completed, one of the authors developed
an algorithm that, although approximate, achieves a significant
speedup of Algorithm~\ref{algo:0} compared to Algorithm~\ref{algo:1}
discussed in this paper \cite{Man25}. In addition, it was shown
that the approximate algorithm has reasonable accuracy and runs
even for the no-three-way interaction model described in
Example~\ref{exam:3way}. The contents explained in Section~\ref{sect:sum}
in this paper are shown to be properties of rational models
discussed by Duarte et al.~\cite{DMS21}.

\section*{Acknowledgements}

The first author was supported in part by JSPS KAKENHI Grants
18H00835 and 20K03742.
The second author was supported in part by the JST, CREST
Grant Number JP19209317.

\begin{flushleft}

Shuhei Mano\\
The Institute of Statistical Mathematics, Tokyo 190-8562, Japan\\
E-mail: smano@ism.ac.jp

\smallskip

Nobuki Takayama\\
Department of Mathematics, Graduate School of Science,
Kobe University, Kobe 657-8501, Japan\\
E-mail: takayama@math.kobe-u.ac.jp

\end{flushleft}

\section{Appendix}

\subsection{Derivation of \eqref{GHTdec}}

In this subsection we derive \eqref{GHTdec} by
the $A$-hypergeometric integral \eqref{GKZint}.
We make further preparations regarding the properties
of graphical models.

Let $B_1$, $\ldots$, $B_k$ be a sequence of the vertex set $V$
of an undirected graph $\mathcal{G}$. Let
\[
  H_j=B_1\cup\cdots\cup B_j,
  \quad S_{j+1}=H_j\cap B_{j+1}, \quad j \in [k-1]. 
\]
The sequence $B_1$, $\ldots$, $B_k$ is said to be {\it perfect}
if the following conditions are satisfied.
\begin{itemize}
  \item[(i)] for all $i>1$ there exists a $j<i$ such that
    $S_{i}\subseteq B_j$;
  \item[(ii)] the sets (separators) $S_i$ are complete for
    all $i$.
\end{itemize}
A separator $S$ may occur several times in a perfect sequence.
The number of occurrence $\nu(S)$ is called {\it multiplicity}.
In addition, a separator can be the empty set.

The {\it boundary} $\text{bd}(A)$ of a subset $A$ of vertices
is the set of vertices in $V\setminus A$ that are neighbours
to vertices in $A$, and the {\it closure} of a subset $A$ is
$\text{cl}(A)=A\cup\text{bd}(A)$. A {\it perfect numbering}
of the vertices $V$ of $\mathcal{G}$ is a numbering
$\alpha_1,\alpha_2,\ldots,\alpha_{|V|}$ such that
\[
  B_l=\text{cl}(\alpha_l)\cap\{\alpha_1,\ldots,\alpha_l\},
  \quad l\in [|V|]
\]
is a perfect sequence of sets. A characterization of chordal
graphs is as follows \cite[Proposition 2.17]{Lau94}.

\begin{proposition}\label{prop:perfect}
  The following conditions are equivalent for an undirected
  graph $\mathcal{G}$.
  \begin{itemize}
  \item[{\rm (i)}] the graph $\mathcal{G}$ is chordal.
  \item[{\rm (ii)}] the vertices of $\mathcal{G}$ admit
    a perfect numbering.
  \item[{\rm (ii)}] the cliques of $\mathcal{G}$ can be
    numbered to form a perfect sequence.
  \end{itemize}
\end{proposition}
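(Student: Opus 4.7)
The plan is to establish the circular chain $(\mathrm{i})\Rightarrow(\mathrm{ii})\Rightarrow(\mathrm{iii})\Rightarrow(\mathrm{i})$. Throughout I would use two ingredients: Dirac's theorem, stating that every chordal graph contains a simplicial vertex (a vertex whose neighbourhood is a clique), and Proposition~2.5 of \cite{Lau94}, which identifies chordality with decomposability. (I also read the running intersection condition in (i) of the definition of perfect sequence as $S_i\subseteq B_j$ for some $j<i$, since this is the standard formulation.)

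For $(\mathrm{i})\Rightarrow(\mathrm{ii})$ I would induct on $|V|$; the single-vertex case is immediate. For the inductive step, pick a simplicial vertex $v$, assign $\alpha_{|V|}=v$, and apply the induction hypothesis to the chordal graph $\mathcal{G}\setminus\{v\}$ to obtain a perfect numbering of the remaining vertices. The new set $B_{|V|}=\text{cl}(v)\cap V=\{v\}\cup\text{bd}(v)$ is a clique by simpliciality, so the new separator $H_{|V|-1}\cap B_{|V|}=\text{bd}(v)$ is complete, and since $\text{bd}(v)$ is already a clique of $\mathcal{G}\setminus\{v\}$, it must be contained in some earlier $B_j$ of the inductively constructed sequence. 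This verifies both clauses of the definition of a perfect sequence.

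For $(\mathrm{ii})\Rightarrow(\mathrm{iii})$ I would observe that every clique $C$ of $\mathcal{G}$ has a largest-indexed vertex $\alpha_\ell$, and completeness forces $C\subseteq B_\ell$; taking $C$ maximal shows that the cliques of $\mathcal{G}$ are exactly the inclusion-maximal elements of $\{B_1,\ldots,B_{|V|}\}$. Numbering those maximal $B_\ell$'s in the order of their occurrence yields $C_1,\ldots,C_k$, and I would check that the separators $S'_j=(C_1\cup\cdots\cup C_{j-1})\cap C_j$ remain complete and still contained in some earlier $C_{j'}$ by tracing the running intersection through the skipped (non-maximal) indices of the $B$-sequence. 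For $(\mathrm{iii})\Rightarrow(\mathrm{i})$ I would induct on $k$: the case $k=1$ is the complete graph, which is trivially chordal. For $k\geq 2$ the perfect sequence property supplies a proper decomposition $(F,F',S)$ with $F=C_k\setminus S_k$, $F'=H_{k-1}\setminus S_k$, and $S=S_k$ complete; the cliques of the two induced subgraphs inherit perfect sequences (one being $C_1,\ldots,C_{k-1}$, the other being the single clique $C_k$), so induction applies and Proposition~2.5 of \cite{Lau94} converts decomposability into chordality.

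The main obstacle is the implication $(\mathrm{ii})\Rightarrow(\mathrm{iii})$, specifically the verification that restricting to the subsequence of inclusion-maximal $B_\ell$'s preserves the running intersection property. The separator $S'_j$ between consecutive cliques in the extracted subsequence is not generally any one of the original separators $S_i$, so one must argue that the preceding clique $C_{j'}$ absorbing $S'_j$ exists by chaining together several steps of the original perfect sequence and using that completeness of separators is inherited along such chains. Once this bookkeeping is handled, the remaining implications are essentially straightforward inductive arguments.
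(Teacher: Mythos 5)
The paper does not actually prove this proposition; it is quoted verbatim from Proposition~2.17 of \cite{Lau94}, so your argument has to stand on its own. Your sketches of $(\mathrm{i})\Rightarrow(\mathrm{ii})$ (simplicial vertex, induction) and of $(\mathrm{iii})\Rightarrow(\mathrm{i})$ (decomposition $(C_k\setminus S_k,\,H_{k-1}\setminus S_k,\,S_k)$ plus induction and Proposition~2.5 of \cite{Lau94}) are sound and match the standard treatment. The problem is exactly where you located it, in $(\mathrm{ii})\Rightarrow(\mathrm{iii})$ --- but the difficulty is not mere bookkeeping: the ordering you propose is simply not a perfect sequence in general, so no amount of ``chaining through the skipped indices'' can rescue it. Take the graph on $\{1,2,3,4,5\}$ consisting of the triangle $\{1,2,3\}$ with a pendant vertex $4$ attached to $1$ and a pendant vertex $5$ attached to $2$, and the numbering $\alpha_1=1,\ \alpha_2=4,\ \alpha_3=2,\ \alpha_4=5,\ \alpha_5=3$. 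Then $B_1=\{1\}$, $B_2=\{1,4\}$, $B_3=\{1,2\}$, $B_4=\{2,5\}$, $B_5=\{1,2,3\}$, with separators $\{1\},\{1\},\{2\},\{1,2\}$, all complete and each contained in an earlier $B_j$; so this is a perfect numbering. The inclusion-maximal $B_\ell$'s in order of occurrence are $C_1=\{1,4\}$, $C_2=\{2,5\}$, $C_3=\{1,2,3\}$, and $(C_1\cup C_2)\cap C_3=\{1,2\}$ is contained in neither $C_1$ nor $C_2$. The running intersection property fails, even though the cliques obviously \emph{can} be perfectly ordered (start with $\{1,2,3\}$). The underlying point is that ordering cliques by highest-numbered vertex is only guaranteed to work for numberings produced by maximum cardinality search, not for an arbitrary perfect numbering.

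The standard repair is to route $(\mathrm{ii})\Rightarrow(\mathrm{iii})$ through $(\mathrm{i})$. First, $(\mathrm{ii})\Rightarrow(\mathrm{i})$ is a short direct argument: given a cycle of length at least four, its highest-numbered vertex $\alpha_\ell$ has both cycle-neighbours in $S_\ell$, which is complete, so they are adjacent and supply a chord. Then prove $(\mathrm{i})\Rightarrow(\mathrm{iii})$ by induction on $|V|$ using a simplicial vertex $v$: the unique maximal clique containing $v$ is $\mathrm{cl}(v)$; apply the induction hypothesis to $\mathcal{G}\setminus\{v\}$ and append $\mathrm{cl}(v)$ at the end, noting that the new separator $\mathrm{bd}(v)$ is complete and hence contained in some maximal clique of $\mathcal{G}\setminus\{v\}$ (with a small case analysis when $\mathrm{bd}(v)$ was itself maximal in $\mathcal{G}\setminus\{v\}$ and must be deleted from the clique list). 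With that replacement your overall plan goes through; as written, the $(\mathrm{ii})\Rightarrow(\mathrm{iii})$ step is a genuine gap.
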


If the boundary of a vertex is complete, then the vertex is
called {\it simplicial.} A vertex without neighbor is also
called simplicial. Since a clique consists of separators and
simplicial vertices, removing all the simplicial vertices
from a chordal graph induces backward elimination of cliques
from a perfect sequence.

\begin{example}
  Consider the simplicial complex
  $\Gamma=[123][124][135][246][247]$.
  The interaction graph $\mathcal{G}(\Gamma)$ is shown in
  the left of Figure~\ref{fig2}.
  \begin{figure}
  \includegraphics[width=\textwidth]{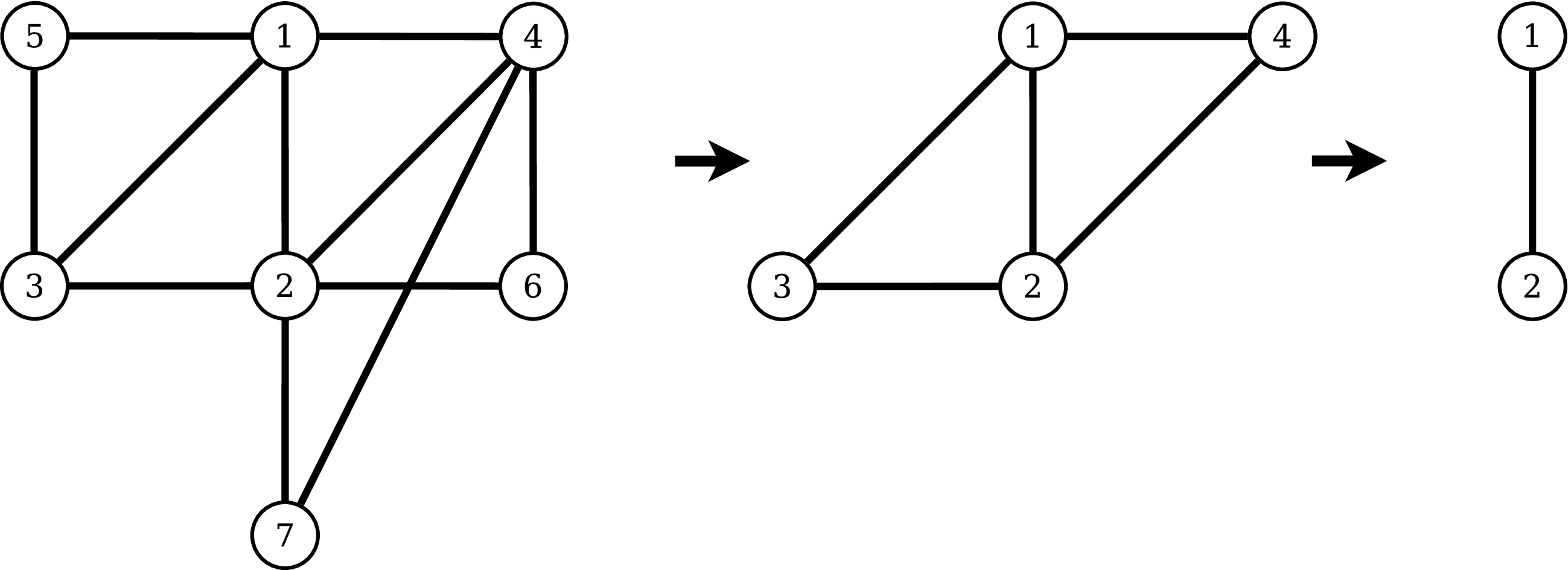}
  \caption{The process of removing simplicial vertices from
    the simplicial complex $[123][124][135][246][247]$.} 
  \label{fig2}
  \end{figure}
  The model is graphical since $\Gamma$ exactly consists
  of the cliques of $\mathcal{G}(\Gamma)$. Moreover,
  $\mathcal{G}(\Gamma)$ is chordal. The vertices of 
  $\mathcal{G}(\Gamma)$ are in a perfect numbering.
  A perfect sequence of cliques is $C_1=\{1,2,3\}$,
  $C_2=\{1,2,4\}$, $C_3=\{1,3,5\}$, $C_4=\{2,4,6\}$, and
  $C_5=\{2,4,7\}$. The separator for this perfect sequence
  is $S_2=\{1,2\},S_3=\{1,3\},S_4=S_5=\{2,4\}$, where
  the separator $\{2,4\}$ has the multiplicity of two.
  The vertices $7$, $6$, and $5$ are simplicial. Removing
  these simplicial vertices eliminates cliques $C_5$,
  $C_4$, and $C_3$, respectively. Then, we have
  the simplicial complex $[123][124]$ with the simplicial
  vertices $4$ and $3$. Removing these simplicial vertices
  eliminates cliques $C_2$ and $C_1$, respectively.
  The process of removing simplicial vertices from $\Gamma$
  is shown in Figure~\ref{fig2}.
\end{example}

As we have seen in Section~\ref{sect:sum}, the configuration
matrix $A$ of a simplicial complex $\Gamma=[F_1][F_2]\cdots$
determines the linear transformation
$u(i_V)\mapsto (u(i_{F_1}),u(i_{F_2}),\ldots)$, $i_V\in\mathcal{I}_V$,
$i_{F_1}\in\mathcal{I}_{F_1}$, $i_{F_2}\in\mathcal{I}_{F_2}$,
$\ldots$ Therefore, the rows of $A$ consists of the states of
the subsets in $\mathcal{I}_{F_1}$, $\mathcal{I}_{F_2}$, $\ldots$,
and the columns of $A$ consists of the states in $\mathcal{I}_V$.
The order of $i_F$ in the rows will be denoted by
${\rm Pos}(i_F)$ and that of $i_V$ in the columns will be
denoted by ${\rm Pos}(i_V)$. For example, for the simplicial
complex $\Gamma=[123][124]$,
${\rm Pos}(1111)=1$, ${\rm Pos}(1112)=2$, $\ldots$, and
${\rm Pos}(111\cdot)=1$, ${\rm Pos}(112\cdot)=2$, $\ldots$,
for the facet $F=[123]$. An expression of $A$ is given by
the following proposition. We can read off the correspondence
between the indices of the hierarchical log-linear model and
those of the configuration matrix. The derivation uses neither
graphical nor chordal. For a graphical model, the rows are
states of the cliques.

\begin{proposition}\label{prop:A}
  Consider a hierarchical log-linear model $\mathcal{M}_\Gamma$
  associated with the simplicial complex $\Gamma=[F_1][F_2]\cdots$.
  Let the rows and the columns of the configuration matrix $A$
  are states of the minimal sufficient statistics and
  the joint states, respectively, and they are ordered in
  the lexicographic order with $1\succ 2\succ\cdots$.
  For a facet $F\subset [m]$, define
  \begin{align*}
    &s_i:=\min\{j: j\in F, j>s_{i-1}, j-1\notin F\},\\
    &t_i:=\min\{j: j\notin F, j>t_{i-1}, j-1\in F\},
  \end{align*}
  for $1\le i\le \{j:t_j=m~\text{or}~m+1\}$ with $s_0=t_0=0$.
  Let
  \[
  A_F=1_{r_1\times \cdots \times r_{s_1-1}}\otimes
      E_{r_{s_1}\times \cdots \times r_{t_1-1}}\otimes
      1_{r_{t_1}\times \cdots \times r_{s_2-1}}\otimes
      E_{r_{s_2}\times \cdots \times r_{t_2-1}}\otimes\cdots,
  \]
  if $s_1>2$ and
  \[
  A_F=E_{r_{1}\times \cdots \times r_{t_1-1}}\otimes
      1_{r_{t_1}\times \cdots \times r_{s_2-1}}\otimes
      E_{r_{s_2}\times \cdots \times r_{t_2-1}}\otimes
      1_{r_{t_2}\times \cdots \times r_{s_3-1}}\otimes\cdots,
  \]
  if $s_1=1$. In other words, put $1$ for the maximal contiguous
  vertices which do not appear in $F$, and put $E$ for
  the maximal contiguous vertices in $F$. Then, the matrix $A$
  has the form
  \begin{equation}
  A_\Gamma=\left(
    \begin{array}{c}
    A_{F_1}\\
    A_{F_2}\\
    \vdots
    \end{array}  
    \right),
  \label{Ahie}
  \end{equation}
  where the column vector of the matrix $A_\Gamma$ is indexed
  by the elements of $\mathcal{I}_V$.
  Let $a_{ij}$ be the $(i,j)$-entry of the matrix $A_\Gamma$.
  For ${\rm Pos}(i_F)=i$ and ${\rm Pos}(i_V)=j$,
  $a_{ij} = 1$ if $i_V\subset i_F$ and $a_{ij}=0$ otherwise.
\end{proposition}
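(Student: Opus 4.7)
My plan is to verify the two claims of the proposition separately. First I would establish the entry-wise description: $a_{ij}=1$ precisely when $i_F\subset i_V$. This follows directly from Definition~\ref{defi:log-lin}, because the parameterization \eqref{para2} writes $\log p_{i_V}$ as a linear combination of the $\log\phi_{i_F}$ (with $F\in\mathrm{facet}(\Gamma)$) plus nuisance terms, and the coefficient of $\log\phi_{i_F}$ is $1$ if $i_F\subset i_V$ and $0$ otherwise. Consequently the minimal sufficient statistics for $\phi$ are the marginal counts $u(i_F)=\sum_{i_V:\,i_F\subset i_V}u(i_V)$, and the configuration submatrix $A_F$ associated to the facet $F$, read off from this linear map with $\mathrm{Pos}(i_F)$ indexing rows and $\mathrm{Pos}(i_V)$ indexing columns, has $(i_F,i_V)$-entry equal to the indicator $\mathbf{1}[i_F\subset i_V]$. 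Nothing in this step uses graphicality or chordality.

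Second, I would show that this entry-wise description coincides with the Kronecker-product expression for $A_F$. Decompose $V=B_1\sqcup B_2\sqcup\cdots$ into maximal contiguous intervals alternating between $B_k\subset F$ and $B_k\subset V\setminus F$; the indices $s_i,t_i$ in the statement encode precisely the boundaries of these blocks, so that each group of the form $[r_{s_i},\ldots,r_{t_i-1}]$ corresponds to a maximal block inside $F$ and each $[r_{t_i},\ldots,r_{s_{i+1}-1}]$ to a maximal block inside $V\setminus F$. Under lexicographic order with the leftmost coordinate most significant, both $\mathcal{I}_V$ and $\mathcal{I}_F$ factorize as Kronecker products over these blocks. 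The indicator then factorizes as $\mathbf{1}[i_F\subset i_V]=\prod_k\mathbf{1}[i_{F\cap B_k}\subset i_{B_k}]$. For a block $B_k\subseteq F$ the factor is $\mathbf{1}[i_{B_k}=i_{B_k}]$, contributing the identity matrix $E_{r_{s_i}\times\cdots\times r_{t_i-1}}$; for a block $B_k\subseteq V\setminus F$ the factor is identically $1$, contributing the all-ones row vector $1_{r_{t_i}\times\cdots\times r_{s_{i+1}-1}}$. Because the $(i,j)$-entry of a Kronecker product equals the product of the corresponding entries of the factors, concatenating these blocks in the order dictated by $V$ produces $A_F$. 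The two cases $s_1>2$ and $s_1=1$ in the statement merely record whether the first block lies in $V\setminus F$ or in $F$. Finally, stacking $A_{F_1},A_{F_2},\ldots$ vertically yields $A_\Gamma$, since the sufficient statistics of the whole model are the concatenation of the per-facet marginals.

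The main obstacle is not conceptual but notational: one must check carefully that the lexicographic order on $\prod_{v\in V}[r_v]$ with the leftmost index most significant is exactly the order induced by the Kronecker product $\bigotimes_{v\in V}[r_v]$ read from left to right, and that this agrees with the block-wise grouping used to define $A_F$. Once this compatibility is fixed, the proposition reduces to the factorization of $\mathbf{1}[i_F\subset i_V]$ over the block decomposition described above, and no further combinatorial argument is needed.
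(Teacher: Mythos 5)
Your proposal is correct and follows essentially the same route as the paper: both identify the entries of $A_F$ with the indicator $\mathbf{1}[i_F\subset i_V]$ coming from the marginalization map $u(i_V)\mapsto u(i_F)$, and then match this with the Kronecker-product structure via the compatibility of lexicographic order with mixed-radix/Kronecker indexing. The only difference is one of presentation: the paper carries out the explicit index arithmetic for the representative facet $F=[13]$ and asserts the general case is similar, whereas you give the general block-factorization argument directly, which is a cleaner way of writing the same computation.
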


\begin{proof}
  To avoid messy expressions, the case of the facet $F=[13]$
  is shown. Similar proof works for any facet. If the $i$-th
  row of $A_F$ determines the minimal sufficient statistic
  $u_{p_1+1,\cdot,p_3}$, which is an element of $u(i_{[13]})$,
  $i=p_1 r_3+p_3$. For the $i$-th row, the $(i,j)$-entry of $A_F$
  is one if $j=p_1 (r_2r_3)+q_2 r_3+p_2$, $q_2\in 0\cup[r_2-1]$,
  and zero otherwise, that is,
  \[
    (a_{[13]})_{ij}
    =\sum_{q_2=0}^{r_2-1}\delta_{p_1 (r_2r_3)+q_2 r_3+p_2,j}.
  \]
  But the right hand side is equivalent to the $(i,j)$-entry
  of $E_{r_1}\otimes 1_{r_2} \otimes E_{r_3}$.
\end{proof}

We prepare the following proposition. Note that $t$ should be
indeterminants, because in the derivation of \eqref{GHTdec}
we use $t$ as variables for the integration along with
$\gamma\in{\mathbb{C}^{*d}}$. 

\begin{proposition}\label{prop:zero}
  If a polynomial $f(t) \in \mathbf{Q}[t_1,\ldots,t_d]$ is
  zero for all $t=T \in \mathbb{N}^d$,
  then we have $f(t)=0$ as a polynomial.
\end{proposition}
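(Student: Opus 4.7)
The plan is to proceed by induction on the number of variables $d$, reducing the multivariate case to the univariate one where a nonzero polynomial can have only finitely many roots.

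For the base case $d=1$, I would use the fact that a nonzero polynomial $f(t_1) \in \mathbf{Q}[t_1]$ has at most $\deg(f)$ roots, hence only finitely many. Since $f$ vanishes on the infinite set $\mathbf{N}$, it must be the zero polynomial.

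For the inductive step, assume the statement holds for $d-1$ variables and let $f(t_1,\ldots,t_d)\in\mathbf{Q}[t_1,\ldots,t_d]$ vanish on all of $\mathbf{N}^d$. I would write
\[
f(t_1,\ldots,t_d)=\sum_{k=0}^{n} g_k(t_1,\ldots,t_{d-1})\,t_d^{k},
\]
with $g_k\in\mathbf{Q}[t_1,\ldots,t_{d-1}]$. Fixing any $(T_1,\ldots,T_{d-1})\in\mathbf{N}^{d-1}$, the univariate polynomial $f(T_1,\ldots,T_{d-1},t_d)$ vanishes on all $T_d\in\mathbf{N}$, so by the base case its coefficients $g_k(T_1,\ldots,T_{d-1})$ are all zero. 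Hence each $g_k$ vanishes on $\mathbf{N}^{d-1}$, and the inductive hypothesis gives $g_k=0$ identically, so $f=0$.

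There is no real obstacle here; the only conceptual point to highlight is that $\mathbf{N}$ is infinite, which is what lets the univariate step succeed. The statement is a standard density-type fact recorded as a lemma because it is used later to lift integral identities at $y=1$ (from the $A$-hypergeometric integral \eqref{GKZint}) to polynomial identities needed in the proof of Theorem~\ref{theo:dec}.
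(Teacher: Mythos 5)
Your proof is correct and follows essentially the same route as the paper: induction on $d$, with the base case resting on a nonzero univariate polynomial having only finitely many roots while $\mathbf{N}$ is infinite, and the inductive step expanding in the last variable and specializing the remaining ones. No differences worth noting.
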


\begin{proof}
  We use an induction in $d$. When $d=1$, if the univariate
  polynomial $f(t)$ is not the zero polynomial, the number of
  zeros of $f(t)$ is up to the degree of $f(t)$. It
  contradicts the assumption, and $f(t)$ should be the zero
  polynomial. When $d=n>1$, the expansion in $t_n$ for some
  $m$:
  \[
    f(t)=\sum_{k=0}^m f_k(t') t_n^k, \quad t'=(t_1,\ldots,t_{n-1})
  \]
  determines polynomials
  $f_k(t')\in \mathbf{Q}[t_1, \ldots, t_{n-1}]$. If we fix
  $T'\in\mathbb{N}^{n-1}$, $f(t)$ reduces to a univariate
  polynomial, and the argument for $d=1$ gives $f_k(T')=0$.
  By the assumption of the induction for $d=n-1$, $f_k(t')=0$
  as a polynomial. Hence $f(t)$ is the zero polynomial.
\end{proof}

The following lemma for a factorization is the key for
the derivation of \eqref{GHTdec}.

\begin{lemma}\label{lemm:factor}
  Consider a decomposable graphical model associated with
  the simplicial complex $\Gamma$. We fix the state of
  all the separators $\mathcal{S}$ to be $i_{\mathcal{S}}$.
  For the set of indeterminants $t(i_V)$, $i_V\in\mathcal{I}_V$,
  we have
  \begin{equation}
    \sum_{i_V{\subset} i_{\mathcal{S}}}\prod_{C\in\mathcal{C}}
    t(i_{C})1\{{i_V\subset i_C}\}=\prod_{C\in\mathcal{C}}
    \sum_{i_C{\subset} i_{\mathcal{S}}}
    t(i_{C})
    \label{factor}
  \end{equation}
  as a polynomial equality, where $\mathcal{C}$ is the set
  of cliques of the interaction graph $\mathcal{G}(\Gamma)=(V,E)$,
  $t(i_F)=\sum_{j\in\mathcal{I}_{V\setminus F}}t(i_F,j)$ for
  any subset $F\subset V$, and $1\{\cdot\}$ is the indicator
  function, i.e. $1\{\cdot\}=1$ if $\cdot$ is true and zero
  otherwise.
\end{lemma}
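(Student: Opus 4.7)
The plan is to show that both sides of \eqref{factor} equal $\prod_{C \in \mathcal{C}} t(i_{\mathcal{S}}|_{C \cap U})$ by explicit evaluation, where $U := \bigcup_{S \in \mathcal{S}} S$ denotes the union of separator supports and $i_{\mathcal{S}}|_{C \cap U}$ is the restriction of the partial state $i_{\mathcal{S}}$ to the vertices $C \cap U$. The key structural fact, and the only place chordality is essential, is that each vertex $v \in V \setminus U$ belongs to exactly one clique: fixing a perfect sequence $C_1, \ldots, C_k$, if $v \in C_i \cap C_j$ with $i < j$, then $v \in H_{j-1} \cap C_j = S_j \subset U$, contradicting $v \notin U$. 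Setting $D_C := C \setminus U$, we obtain the partition $V = U \sqcup \bigsqcup_{C \in \mathcal{C}} D_C$.

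For the left hand side, each $i_V \supset i_{\mathcal{S}}$ is specified by $i_{\mathcal{S}}$ on $U$ together with independently chosen states $i_{D_C} \in \mathcal{I}_{D_C}$ for the cliques. The unique $i_C \subset i_V$ then decomposes as $i_C = (i_{\mathcal{S}}|_{C \cap U}, i_{D_C})$, so the product in $\prod_C t(i_C) 1\{i_C \subset i_V\}$ is separable in the variables $(i_{D_C})_C$ and the outer sum factorizes:
\[
  \text{LHS} = \prod_{C \in \mathcal{C}} \sum_{i_{D_C} \in \mathcal{I}_{D_C}} t\bigl((i_{\mathcal{S}}|_{C \cap U}, i_{D_C})\bigr) = \prod_{C \in \mathcal{C}} t(i_{\mathcal{S}}|_{C \cap U}),
\]
the last equality being the defining property of the marginal $t$.

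For the right hand side, $i_C \equiv i_{\mathcal{S}}$ pins down the coordinates of $i_C$ in $C \cap U$ (to $i_{\mathcal{S}}|_{C \cap U}$) and leaves $D_C$ free, so $\sum_{i_C \equiv i_{\mathcal{S}}} t(i_C)$ again marginalizes over $D_C$ and equals $t(i_{\mathcal{S}}|_{C \cap U})$. Both sides of \eqref{factor} therefore coincide with $\prod_C t(i_{\mathcal{S}}|_{C \cap U})$. The main obstacle is the unique-clique-containment claim above; once it is in hand, all remaining steps are formal identities among marginals in the polynomial ring $\mathbf{Q}[t(i_V): i_V \in \mathcal{I}_V]$, so Proposition~\ref{prop:zero} need not be invoked.
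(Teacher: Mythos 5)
Your proof is correct, and it takes a genuinely different route from the paper's. The paper argues probabilistically: it considers a measure obeying the global Markov property \eqref{markov} relative to $\mathcal{G}(\Gamma)$, writes the conditional law given $X_{\mathcal{S}}=i_{\mathcal{S}}$ as a product over cliques, sums over $i_V\supset i_{\mathcal{S}}$ to get \eqref{factor} as a numerical identity, and then invokes Proposition~\ref{prop:zero} to promote it to a polynomial identity. You instead prove the identity combinatorially: the observation that every vertex outside $U=\bigcup_{S\in\mathcal{S}}S$ lies in exactly one clique (an immediate consequence of the existence of a perfect sequence, hence of chordality) yields the partition $V=U\sqcup\bigsqcup_{C}(C\setminus U)$, after which both sides collapse to $\prod_{C}t(i_{\mathcal{S}}|_{C\cap U})$ by the distributive law and the defining property of the marginals $t(i_F)$; your reduction of the right-hand factor $\sum_{i_C\equiv i_{\mathcal{S}}}t(i_C)$ to the same marginal is also correct, since $i_C\equiv i_{\mathcal{S}}$ pins $i_C$ down exactly on $C\cap U$. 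Your route is more elementary and arguably more robust: it establishes \eqref{factor} for arbitrary indeterminants directly, so Proposition~\ref{prop:zero} is indeed not needed. By contrast, the paper's phrase ``any probability measure'' really means any measure satisfying the global Markov property --- a proper subfamily of all measures --- so the hypothesis of Proposition~\ref{prop:zero} (vanishing at all integer points) requires some additional care in the paper's version. What the paper's approach buys is the conceptual link between the factorization \eqref{factor} and conditional independence, which is the running theme of Section 3; what yours buys is a short, self-contained, purely algebraic verification that isolates precisely where chordality enters.
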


\begin{proof}
  Consider a probability measure of $X$ taking values in
  the joint state space $\mathcal{I}_V$. We fix the state of
  all the separators $\mathcal{S}$ to be $i_{\mathcal{S}}$
  with satisfying $\mathbf{P}(X_{\mathcal{S}}=i_{\mathcal{S}})>0$.
  We may write
  \[
  \mathbf{P}(X_V=i_V|X_{\mathcal{S}}=i_{\mathcal{S}})
  =\frac{t(i_V)}{t(i_{\mathcal{S}})},
  \quad \forall i_V {\subset} i_{\mathcal{S}}.
  \]
  On the other hand, for each clique $C$ we may write
  \[
    \mathbf{P}(X_{C\setminus\mathcal{S}
    =i_{C\setminus\mathcal{S}}}|X_{\mathcal{S}}=i_{\mathcal{S}})
    =\frac{t(i_{C})}{\sum_{{i'_C\subset} i_{\mathcal{S}}}t({i'_{C}})},
    \quad \forall i_C{\subset} i_{\mathcal{S}}.
  \]
  Since the global Markov property \eqref{markov} implies
  \[
  \mathbf{P}(X_V=i_V|X_{\mathcal{S}}=i_{\mathcal{S}})
  =\prod_{C\in\mathcal{C}}\mathbf{P}(X_{C\setminus\mathcal{S}}=i_{C\setminus\mathcal{S}}|X_{\mathcal{S}}=i_{\mathcal{S}}),
  \]
  we have
  \[
    \frac{t(i_V)}{t(i_{\mathcal{S}})}=\prod_{C\in\mathcal{C}}
    \frac{t(i_{C})}{\sum_{{i'_C\subset }i_{\mathcal{S}}}t({i'_{C}})},
    \quad \forall i_V{\subset} i_{\mathcal{S}},
  \]
  where $i_V\subset i_C\subset i_{\mathcal{S}}$,
  $\forall C\in\mathcal{C}$. Since the denominator of the right
  hand side does not depend on $i_{V\setminus\mathcal{S}}$
  and
  $\sum_{i_{V}{\subset} i_\mathcal{S}}t(i_V)=t(i_{\mathcal{S}})$,
  we have
  \[
    \sum_{i_V{\subset} i_{\mathcal{S}}}\prod_{C\in\mathcal{C}}
    t(i_{C})1\{{i_V\subset i_C}\}=\prod_{C\in\mathcal{C}}
    \sum_{i_C{\subset} i_{\mathcal{S}}}t(i_{C}).
  \]
  Since this argument holds for any probability measure of $X$,
  Proposition~\ref{prop:zero} guarantees that \eqref{factor}
  holds even if $t(i_V)$, $i_V\in\mathcal{I}_V$ are indeterminants.
\end{proof}

\begin{proof}[Derivation of \eqref{GHTdec}]
  Let the interaction graph $\mathcal{G}(\Gamma)$ be a chordal
  graph with at least four vertices. We use a backward
  inductive argument with respect to the number of vertices.
  Figure~\ref{fig2} shows the process of integration for
  the simplicial complex $[123][124][135][246][247]$.
  The set of simplicial vertices is denoted by
  $V\setminus\mathcal{S}=:V_*$. Then, $\mathcal{G}(\Gamma)$
  has at least two simplicial vertices (Lemma 2.9 of \cite{Lau94}).
  If states of all separators are fixed to be $i_{\mathcal{S}}$,
  \begin{equation}
    \sum_{\{j:j\in[m],i_j{\subset} i_{\mathcal{S}}\}}
    \prod_{i\in[d]}t_i^{a_{ij}}
    =\sum_{\{j:j\in[m],i_j{\subset} i_{\mathcal{S}}\}}
    \prod_{C\in\mathcal{C}}t_{i_{C}}1\{{i_j\subset i_C}\}
    =\prod_{C\in\mathcal{C}}
    \sum_{i_C{\subset}i_{\mathcal{S}}}t_{i_{C}},
    \label{sumdec}
  \end{equation}
  where the factorization of the second equality follows by
  the formula \eqref{factor} of Lemma~\ref{lemm:factor}
  and Proposition~\ref{prop:zero}. The multinomial expansion
  yields
  \begin{align*}
    \left(\sum_{j\in[m]}\prod_{i\in[d]}t_i^{a_{ij}}\right)^n
    &=\sum_{\{v(i_{\mathcal{S}}):\sum_{i_{\mathcal{S}}} v(i_{\mathcal{S}})=n\}}
    \frac{n!}{\prod_{i_{\mathcal{S}}} v(i_{\mathcal{S}})!}
    \prod_{i_{\mathcal{S}}}\prod_{C\in\mathcal{C}}
    \left(\sum_{i_C{\subset} i_{\mathcal{S}}}t_{i_C}\right)^{v(i_{\mathcal{S}})}\\
    &=\sum_{\{v(i_{\mathcal{S}}):\sum_{i_{\mathcal{S}}} v(i_{\mathcal{S}})=n\}}
    \frac{n!}{\prod_{i_{\mathcal{S}}} v(i_{\mathcal{S}})!}
    \prod_{C\in\mathcal{C}}
    \prod_{i_{\mathcal{S}}}\left(\sum_{i_C{\subset} i_{\mathcal{S}}}
    t_{i_C}\right)^{v(i_{\mathcal{S}})}.
  \end{align*}
  Here $v(i_{\mathcal{S}})$ is a variable which takes a value in
  $\mathbb{N}_0$ indexed by the elements of $\mathcal{I}_{\mathcal{S}}$.
  If a clique does not contain simplicial vertices, the indices
  of the clique are completely specified by the condition
  $i_C{\subset} i_{\mathcal{S}}$ and we have
  \[
    \prod_{i_{\mathcal{S}}}\left(\sum_{i_C{\subset} i_{\mathcal{S}}}
    t_{i_C}\right)^{v(i_{\mathcal{S}})}=\prod_{i_C}t_{i_C}^{v(i_C)},
  \]
  Otherwise,
  \begin{align*}
    &\prod_{i_{\mathcal{S}}}\left(\sum_{i_C{\subset} i_{\mathcal{S}}}
    t_{i_C}\right)^{v(i_{\mathcal{S}})}
    =\prod_{i_{C\setminus V_*}}
    \left(\sum_{i_C{\subset} i_{C\setminus V_*}}
    t_{i_C}\right)^{v(i_{C\setminus V_*})}\\
    &=\prod_{i_{C\setminus V_*}}
    \sum_{\{v(i_C):\sum_{i_C{\subset} i_{C\setminus V_*}}
    v(i_C)=v(i_{C\setminus V_*})\}}
    \frac{v(i_{C\setminus V_*})!}
    {\prod_{i_C{\subset} i_{C\setminus V_*}}v(i_C)!}
    \prod_{i_C{\subset} i_{C\setminus V_*}}t_{i_C}^{v(i_C)}.
  \end{align*}
  Note that $C\setminus V_*\subset S$ and the inclusion
  can be strict. Finally,
  \[
    \prod_{i\in[d-1]}t_i^{-b_i-1}dt_i=
    \prod_{C\in\mathcal{C}}\prod_{i_C}t_{i_C}^{-u(i_C)-1}
    dt_{i_C}.
  \]
  The integrand is holomorphic on $\mathbb{C}^{*d-1}$.
  By using Cauchy's integral theorem, the integral \eqref{GKZint}
  with taking the $(d-1)$-cycle $\gamma$ as the $(d-1)$-complex torus
  $T^{d-1}=\mathbb{C}^{*d-1}$
  around the origin is evaluated as
  \begin{align*}
    &Z_{A_\Gamma}(b_\Gamma;1)\propto
    \sum_{\{v(i_{\mathcal{S}}):\sum_{i_{\mathcal{S}}}v(i_{\mathcal{S}})=n\}}
    \frac{\prod_{C\in\mathcal{C}_0}\prod_{i_C}1(v(i_C)=u(i_C))}
         {\prod_{i_{\mathcal{S}}}v(i_{\mathcal{S}})!}\\
    &\times\prod_{C \in\mathcal{C}\setminus\mathcal{C}_0}
    \prod_{i_{C\setminus V_*}}
    \sum_{
      \{v(i_C):\sum_{i_C{\subset} i_{C\setminus V_*}}v(i_C)=v(i_{C\setminus V_*})\}}
    \frac{v(i_{C\setminus V_*})!}{\prod_{i_C{\subset} i_{C\setminus V_*}}v(i_C)!}1(v(i_C)=u(i_C))\\
    &=
    \sum_{\{v(i_{\mathcal{S}}):\sum_{i_{\mathcal{S}}} v(i_{\mathcal{S}})=n\}}
    \frac{\prod_{C\in\mathcal{C}_0}\prod_{i_C}1(v(i_C)=u(i_C))}{\prod_{i_{\mathcal{S}}}v(i_{\mathcal{S}})!}
    \prod_{C \in\mathcal{C}\setminus\mathcal{C}_0}
    \prod_{i_{C\setminus V_*}}\frac{u(i_{C\setminus V_*})!}
    {\prod_{i_C{\subset} i_{C\setminus V_*}}u(i_C)!}
  \end{align*}
  where $\mathcal{C}_0$ is the set of cliques containing no
  simplicial vertices. The summation appears in the last expression
  is the $A$-hypergeometric polynomial associated with the graph
  $\mathcal{G}(\Gamma)$ with removing $V_*$, say
  $Z_{A_{\Gamma_0}}(b_{\Gamma_0};1)$. A chordal graph whose
  simplicial vertices are removed is again a chordal graph. 
  By the assumption of the induction, we should have
  \[
    Z_{A_{\Gamma_0}}(b_{\Gamma_0};1)=
    \frac{\prod_{S\in\mathcal{S}}\{\prod_{i_S\in\mathcal{I}_S}
    u(i_S)!\}^{\nu_0(S)}}
    {\prod_{C\in\mathcal{C}_0}\prod_{i_C\in\mathcal{I}_C}u(i_C)!},
  \]
  where $\nu_0(S)$ is the multiplicity of separator $S$
  in a perfect sequence of the cliques in the set $\mathcal{C}_0$.
  This observation proves the assertion for
  $Z_{A_{\Gamma}}(b_{\Gamma};1)$, because when we remove
  $V_*$ from $\mathcal{G}(\Gamma)$ the set
  $C\setminus V_*$, $C\in\mathcal{C}\setminus\mathcal{C}_0$
  becomes a separator. Therefore, we have
  \[
    \prod_{C \in\mathcal{C}\setminus\mathcal{C}_0}
    \prod_{i_{C\setminus V_*}}u(i_{C\setminus V_*})!
    =\prod_{S\in\mathcal{S}}\{\prod_{i_S\in\mathcal{I}_S}
    u(i_S)!\}^{\nu(S)-\nu_0(S)}.
  \]
  By the backward induction, our task reduces to confirm
  the assertion for a set of disconnected simplices. If
  the set consists of a single simplex, the computation is
  trivial, because $A=E_m$ and $Z_A(b;1)=m^n/n!$. Otherwise,
  an $A$-hypergeometric integral is involved. We discuss
  the case with two vertices, but other cases can be discussed
  in similar ways. The graph coincides that of the independence
  model of two-way contingency tables discussed in
  Section~\ref{subs:2}. For simplicity, we display explicit
  expressions for the binary case. Taking the 3-cycle $\gamma$
  as the 3-complex torus $T^3$ around the origin, the integral
  becomes
  \begin{align*}
    &\int_{T^3}
    (t_{1\cdot}+t_{2\cdot})^n(1+t_{\cdot 1})^n
    t_{1\cdot}^{-u_{1\cdot}-1}t_{2\cdot}^{-u_{2\cdot}-1}
    t_{\cdot1}^{-u_{\cdot1}-1}dt_{1\cdot}dt_{2\cdot}
    dt_{\cdot1}\\
    &=\int_{T^3}
    \sum_{v_{1\cdot}=0}^n\sum_{v_{\cdot1}=0}^n
    \left(\begin{array}{c}n\\v_{1\cdot}\end{array}\right)
    \left(\begin{array}{c}n\\v_{\cdot1}\end{array}\right)
    t_{1\cdot}^{v_{1\cdot}-u_{1\cdot}-1}
    t_{2\cdot}^{n-v_{1\cdot}-u_{2\cdot}-1}
    t_{\cdot1}^{v_{\cdot1}-u_{\cdot1}-1}dt_{1\cdot}dt_{2\cdot}
    dt_{\cdot1}\\
    &=
    \int_{T}\sum_{v_{\cdot1}=0}^n
    \left(\begin{array}{c}n\\v_{\cdot1}\end{array}\right)
    t_{\cdot1}^{v_{\cdot1}-u_{\cdot1}-1}dt_{\cdot1}
    \int_{T^2}\sum_{v_{1\cdot}=0}^n
    \left(\begin{array}{c}n\\v_{1\cdot}\end{array}\right)
    t_{1\cdot}^{v_{1\cdot}-u_{1\cdot}-1}
    t_{2\cdot}^{n-v_{1\cdot}-u_{2\cdot}-1}dt_{1\cdot}dt_{2\cdot}\\
    &=2\pi\sqrt{-1}\left(\begin{array}{c}n\\u_{\cdot1}\end{array}
    \right)
    \int_{T}\left\{\int_{T}
    \sum_{v_{1\cdot}=0}^n
    \left(\begin{array}{c}n\\v_{1\cdot}\end{array}\right)
    t_{1\cdot}^{v_{1\cdot}-u_{1\cdot}-1}
    t_{2\cdot}^{u-v_{1\cdot}-u_{2\cdot}-1}dt_{1\cdot}\right\}
    dt_{2\cdot}\\
    &=(2\pi\sqrt{-1})^2
    \left(\begin{array}{c}n\\u_{\cdot1}\end{array}\right)
    \left(\begin{array}{c}n\\u_{1\cdot}\end{array}\right)
    \int_{T}t_{2\cdot}^{-1}dt_{2\cdot}
    =\frac{(2\pi\sqrt{-1})^3n!^2}
    {u_{1\cdot}!u_{2\cdot}!u_{\cdot1}!u_{\cdot2}!}.
  \end{align*}
  where the last three equalities follow by Cauchy's integral
  theorem. The proportional constant can be fixed by the case
  that $b$ is the zero vector, and we obtain
  the expression \eqref{GHTdec} for the $A$-hypergeometric polynomial.
\end{proof}  

\begin{example}\label{exam:graph6}
  Consider a binary decomposable graphical model associated with the simplicial complex
  $\Gamma=[123][124][135][246]$.
  The simplicial vertices are $5$ and $6$. A perfect
  sequence of cliques is $C_1=\{1,2,3\}$,
  $C_2=\{1,2,4\}$, $C_3=\{1,3,5\}$ and $C_4=\{2,4,6\}$, and
  the separators are $S_2=\{1,2\}$, $S_3=\{1,3\}$, and
  $S_4=\{2,4\}$. The summation \eqref{sumdec} in the derivation of
  \eqref{GHTdec} consists of terms with
  fixing possible set of indices of $\mathcal{S}=\{1,2,3,4\}$
  becomes
  \begin{align*}
    \sum_{j\in[2^6]}\prod_{i\in[8\times4]}t_i^{a_{ij}}
    =&t_{111\cdot\cdot\cdot}t_{11\cdot1\cdot\cdot}
    (t_{1\cdot1\cdot1\cdot}t_{\cdot1\cdot1\cdot1}
    +t_{1\cdot1\cdot1\cdot}t_{\cdot1\cdot1\cdot2}
    +t_{1\cdot1\cdot2\cdot}t_{\cdot1\cdot1\cdot1}
    +t_{1\cdot1\cdot2\cdot}t_{\cdot1\cdot1\cdot2})\\
    &+\cdots\\
    =&t_{111\cdot\cdot\cdot}t_{11\cdot1\cdot\cdot}
    (t_{1\cdot1\cdot1\cdot}+t_{1\cdot1\cdot2\cdot})
    (t_{\cdot1\cdot1\cdot1}+t_{\cdot1\cdot1\cdot2})
    +(\text{other~15~terms}),
  \end{align*}
  where the first term of the right hand side appears for
  $i_{\mathcal{S}}=(1111\cdot\cdot)$.
  The second equality comes from the factorization formula
  \eqref{factor} in Lemma~\ref{lemm:factor} with the global
  Markov property $X_5\perp\!\!\!\perp X_6|X_{\{1,2,3,4\}}$,
  where the states of the cliques $C_1$ and $C_2$ are uniquely
  determined by $i_{\mathcal{S}}$. The multinomial expansion
  of the $n$-th power of this expression is
  \begin{align*}
    &\sum
    \frac{n!}{v_{1111\cdot\cdot}!\cdots v_{2222\cdot\cdot}!}
    \{t_{111\cdot\cdot\cdot}t_{11\cdot1\cdot\cdot}
    (t_{1\cdot1\cdot1\cdot}+t_{1\cdot1\cdot2\cdot})
    (t_{\cdot1\cdot1\cdot1}+t_{\cdot1\cdot1\cdot2})
    \}^{v_{1111\cdot\cdot}}\cdots\\
    =&\sum
    \frac{n!}{v_{1111\cdot\cdot}!\cdots v_{2222\cdot\cdot}!}
    t_{111\cdot\cdot\cdot}^{v_{111\cdot\cdot\cdot}}
    t_{11\cdot1\cdot\cdot}^{v_{11\cdot1\cdot\cdot}}
    (t_{1\cdot1\cdot1\cdot}+t_{1\cdot1\cdot2\cdot})^{v_{1\cdot1\cdot\cdot\cdot}}
    (t_{\cdot1\cdot1\cdot1}+t_{\cdot1\cdot1\cdot2})^{v_{\cdot1\cdot1\cdot\cdot}}\cdots\\
    =&\sum
    \frac{n!}{v_{1111\cdot\cdot}!\cdots v_{2222\cdot\cdot}!}
    \frac{v_{1\cdot1\cdot\cdot\cdot}!v_{\cdot1\cdot1\cdot\cdot}!
          v_{1\cdot2\cdot\cdot\cdot}!v_{\cdot1\cdot2\cdot\cdot}!
          v_{2\cdot1\cdot\cdot\cdot}!v_{\cdot2\cdot1\cdot\cdot}!
          v_{2\cdot2\cdot\cdot\cdot}!v_{\cdot2\cdot2\cdot\cdot}!}
        {v_{1\cdot1\cdot1\cdot}!v_{1\cdot1\cdot2\cdot}!
         v_{\cdot1\cdot1\cdot1}!v_{\cdot1\cdot1\cdot2}!\cdots
         v_{2\cdot2\cdot1\cdot}!v_{2\cdot2\cdot2\cdot}!
         v_{\cdot2\cdot2\cdot1}!v_{\cdot2\cdot2\cdot2}!}\\
    &\times
        t_{111\cdot\cdot\cdot}^{v_{111\cdot\cdot\cdot}}
        t_{11\cdot1\cdot\cdot}^{v_{11\cdot1\cdot\cdot}}\cdots
        t_{222\cdot\cdot\cdot}^{v_{222\cdot\cdot\cdot}}
        t_{22\cdot2\cdot\cdot}^{v_{22\cdot2\cdot\cdot}}
        t_{1\cdot1\cdot1\cdot}^{v_{1\cdot1\cdot1\cdot}}
        t_{1\cdot1\cdot2\cdot}^{v_{1\cdot1\cdot2\cdot}}
        t_{\cdot1\cdot1\cdot1}^{v_{\cdot1\cdot1\cdot1}}
        t_{\cdot1\cdot1\cdot2}^{v_{\cdot1\cdot1\cdot2}}\cdots
        t_{2\cdot2\cdot1\cdot}^{v_{2\cdot2\cdot1\cdot}}
        t_{2\cdot2\cdot2\cdot}^{v_{2\cdot2\cdot2\cdot}}
        t_{\cdot2\cdot2\cdot1}^{v_{\cdot2\cdot2\cdot1}}
        t_{\cdot2\cdot2\cdot2}^{v_{\cdot2\cdot2\cdot2}}.
  \end{align*}
  The $A$-hypergeometric integral leads to
  \begin{align*}
    &\frac{u_{1\cdot1\cdot\cdot\cdot}!u_{\cdot1\cdot1\cdot\cdot}!
           u_{1\cdot2\cdot\cdot\cdot}!u_{\cdot1\cdot2\cdot\cdot}!
           u_{2\cdot1\cdot\cdot\cdot}!u_{\cdot2\cdot1\cdot\cdot}!
           u_{2\cdot2\cdot\cdot\cdot}!u_{\cdot2\cdot2\cdot\cdot}!}
          {u_{1\cdot1\cdot1\cdot}!u_{1\cdot1\cdot2\cdot}!
           u_{\cdot1\cdot1\cdot1}!u_{\cdot1\cdot1\cdot2}!\cdots
           u_{2\cdot2\cdot1\cdot}!u_{2\cdot2\cdot2\cdot}!
           u_{\cdot2\cdot2\cdot1}!u_{\cdot2\cdot2\cdot2}!}\\
       &\times\sum_{\{(v_{1111\cdot\cdot},\ldots,v_{2222\cdot\cdot}):
       v_{111\cdot\cdot\cdot}=u_{111\cdot\cdot\cdot},\ldots,
       v_{22\cdot2\cdot\cdot}=u_{22\cdot2\cdot\cdot}\}}
       \frac{1}{v_{1111\cdot\cdot}!\cdots v_{2222\cdot\cdot}!}
  \end{align*}
  up to a constant. Here, the summation is the $A$-hypergeometric
  polynomial associated with the simplicial complex $[123][124]$,
  which was already obtained in Example~\ref{exam:4cyclet}.
  The expression
  \[
  \frac{u_{11\cdot\cdot\cdot\cdot}!u_{12\cdot\cdot\cdot\cdot}!
        u_{21\cdot\cdot\cdot\cdot}!u_{22\cdot\cdot\cdot\cdot}!}
       {u_{111\cdot\cdot\cdot}!\cdots u_{22\cdot2\cdot\cdot}!},
  \]
  confirms the formula \eqref{GHTdec}.     
\end{example}  

\subsection{The Pfaffian system for the binary no-three-way interaction model}

The Pfaffian system is given by the following matrices
(the fourth rows are specified later):
\[
\tilde{P}_8=
\left(
\begin{array}{cccc}
  0&1&0&0\\0&0&1&0\\0&0&0&1\\ *&*&*&*
\end{array}
\right), \quad
\tilde{P}_1=
\left(
\begin{array}{cccc}
  -c_4&-1&0&0\\0&-c_4&-1&0\\0&0&-c_4&-1\\ *&*&*&*
\end{array}
\right),
\]
\[
\tilde{P}_2=
\left(
\begin{array}{cccc}
  c_4-c_1&1&0&0\\0&c_4-c_1&1&0\\0&0&c_4-c_1&1\\ *&*&*&*
\end{array} 
\right),
\]
\[
\tilde{P}_4=
\left(
\begin{array}{cccc}
  c_4-c_2&1&0&0\\0&c_4-c_2&1&0\\0&0&c_4-c_2&1\\ *&*&*&*
\end{array} 
\right),
\]
\[
\tilde{P}_6=
\left(
\begin{array}{cccc}
  c_4-c_3&1&0&0\\0&c_4-c_3&1&0\\0&0&c_4-c_3&1\\ *&*&*&*
\end{array}
\right),
\]
\[
\tilde{P}_3=
\left(
\begin{array}{cccc}
  d_2-c_4-1&-1&0&0\\0&d_2-c_4-1&-1&0\\0&0&d_2-c_4-1&1\\ *&*&*&*
\end{array}
\right),
\]
\[
\tilde{P}_5=
\left(
\begin{array}{cccc}
  d_3-c_4-1&-1&0&0\\0&d_3-c_4-1&-1&0\\0&0&d_3-c_4-1&1\\ *&*&*&*
\end{array}
\right),
\]
\[
\tilde{P}_7=
\left(
\begin{array}{cccc}
  d_4-c_4-1&-1&0&0\\0&d_4-c_4-1&-1&0\\0&0&d_4-c_4-1&1\\ *&*&*&*
\end{array}
\right).
\]
The entries of the fourth rows of these matrices can be obtained
as follows. Since $\sum_{i=1}^4(d_i-c_i)-4=u_{\cdot\cdot\cdot}>0$,
we can use the recurrence relation \eqref{cl_rec1} denoted by
\[
  \theta^4=r_3(z)\theta^3+r_2(z)\theta^2+r_1(z)\theta+r_0(z).
\]
With using
\[
  \theta_8^iZ_{A^{(2^3)}}(b;z)
  =cy^s(-c_4-\theta)^i{}_4F_3(c;d;z), \quad i\in\mathbb{N}
\]
and
\[
  cy^s\theta^i{}_4F_3(c;d;z)=(-c_4-\theta_8)^iZ_{A^{(2^3)}}(b;z),
  \quad i\in\mathbb{N},
\]
and the recurrence relation \eqref{cl_rec2} denoted by
\[
  \theta^4=r_3\theta^3+r_2\theta^2+r_1\theta+r_0,
\]
where $\sum_{i=1}^4(d_i-c_i)-5=u_{\cdot\cdot\cdot}-1>0$,
we can read off the fourth row of $P_8$ from the expression
\begin{align*}
  \theta_8^4Z_{A^{(2^3)}}(b;y)=
  &\{(-4c_4-r_3)\theta_8^3+(-6c_4^2-3r_3c_4+r_2)\theta_8^2\\
  &+(-4c_4^3-3r_3c_4^2+2r_2c_4-r_1)\theta_8\\
  &+(-c_4^4-r_3c_4^3+r_2c_4^2-r_1c_4+r_0)\}Z_{A^{(2^3)}}(b;y)\\
  =&\{(p_8)_{41}+(p_8)_{42}\theta_8+(p_8)_{43}\theta_8^2
   +(p_8)_{44}\theta_8^3\}Z_{A^{(2^3)}}(b;y).
\end{align*}
The entries of the fourth row of the other matrices can be
obtained in a similar way.
  

\begin{thebibliography}{99}

\bibitem{AAR96}
Andrews, G.E., Askey, R., Roy, R.,
\textit{Special Functions},
Encyclopedia of Mathematics and Its Applications, Vol. 71,
Cambridge Univ. Press, Cambridge, 1996.

\bibitem{AHT12}
Aoki, S., Hara, H., Takemura, A.,
\textit{Markov Bases in Algebraic Statistics},
Springer, New York, 2012.

\bibitem{AK11}
Aomoto, K. and Kita, M.,
\textit{Theory of Hypergeometric Functions},
Springer, New York, 2011.

\bibitem{BHI11}
Bruns, W., Hemmecke, R., Ichim, B., K\"oppe, M., S\"oger, C.,
Challenging computations of Hilbert bases of cones associated with
algebraic statistics,
Experimental Mathematics,
\textbf{20} (2011), 25--33.

\bibitem{DES98}
Diaconis, P., Eisenbud, B., Sturmfels, B.,
Lattice walks and primary decomposition,
In Sagan B.E., Stanley, R.P. (eds.),
Mathematical Essays in Honor of Gian-Carlo Rota (Cambridge, MA, 1996),
Progress in Mathematics, Vol. 161, pp. 173--193,
Birkh\"auser, Boston, 1998.

\bibitem{DS98}
Diaconis, P. and Sturmfels, B.,
Algebraic algorithms for sampling from conditional distributions,
Ann. Statist.,
\textbf{26} (1998), 363--397.

\bibitem{DMS21}
Duarte, E., Marigliano, O, Sturmfels, B.,
Discrete statistical models with rational maximum likelihood estimator,
Bernoulli,
\textbf{27} (2021), 135--154.
  
\bibitem{DG00}
Dyer, M. and Greenhill, C.,
Polynomial-time counting and sampling of two-rowed contingency tables,
Theoret. Comput. Sci.,
\textbf{246} (2000), 265--278.

\bibitem{Erd53}
Erd\'elyi, A. Ed.,
\textit{Higher Transcendental Functions, Vol. I.},
McGraw-Hill, New York, 1953.

\bibitem{GJ00}
Gawrilow, E. and Joswig, M.,
Polymake: a framework for analyzing convex polytopes,
In Polytopes--combinatorics and computation (Oberwolfach, 1997),
Vol. 29 of DMV Sem., pp. 43--73,
Birkhäuser, Basel, 2000.
  
\bibitem{GKZ90}
Gel'fand, I.M., Kapranov, M.M., Zelevinsky, A.V.,
Generalized Euler Integrals and $A$-hypergeometric functions,
Adv. Math.,
\textbf{84} (1990), 255--271.

\bibitem{GM18}
Goto, Y. and Matsumoto, K.,
Pfaffian equations and contiguity relations of the hypergeometric
function of type $(k+1,k+n+2)$ and their applications,
Funkcial. Ekvac.,
\textbf{61} (2018), 315--347.

\bibitem{Has70}
Hastings, W.K.,
Monte Carlo sampling methods using markov chains and their applications,
Biometrika, \textbf{57} (1970), 97--109. 

\bibitem{HS02}
Ho\c{s}ten, S. and Sullivant, S.,
Gr\"obner bases and polyhedral geometry of reducible and cyclic models,
J. Combin. Theor. Ser. A,
\textbf{100} (2002), 277–-301.

%\bibitem{Gey11}
%Geyer, C.
%Introduction to Markov Chain Monte Carlo,
%In Handbook of Markov Chain Monte Carlo,
%Brooks, S., Gelman, A., Jones, G.L. and Meng, X.-L. Eds. pp. 3--48,
%Chapman Hall/CRC, Boca Raton, 2011.

\bibitem{KM06}
Kijima, S. and Matsui, T.,
Polynomial time perfect sampling algorithm for two‐rowed contingency
tables,
Random Structures Algorithms,
\textbf{29} (2006), 243--256.

\bibitem{Lau94}
Lauritzen, S.,
\textit{Graphical Models},
Oxford Univ. Press, Oxford, 1994.

%\bibitem{LP17}
%Levin, D.A. and Peres Y.,
%\textit{Markov Chains and Mixing Times},  
%Amer. Math. Soc., Providence, RI, 2017.

\bibitem{Man17}
Mano, S.,
Partition structure and the $A$-hypergeometric distribution associated
with the rational normal curve,
Electron. J. Stat.,
\textbf{11} (2017), 4452--4487.

\bibitem{Man18}
Mano, S.,
\textit{Partitions, Hypergeometric Systems, and Dirichlet Processes
  in Statistics},
Springer Briefs in Statistics, Springer, Tokyo, 2018.

\bibitem{Man25}
Mano, S.,
Direct sampling from conditional distributions by sequential maximum
likelihood estimations,
J. Algebra Stat. to appear,
Preprint, arXiv: 2502.00812.

\bibitem{Mez10}
Mezzaroba, M.,
NumGfun: a Package for numerical and analytic computation with
D-finite functions,
the proceedings of the international symposium on symbolic and
algebraic computation ISSAC'10, 139--146, 2010.

\bibitem{NT92}
Noro, M. and Takeshima, T.,
Risa/Asir: a computer algebra system,
the proceedings of the international symposium on symbolic and
algebraic computation ISSAC'92, 387--396, 1992,
{\tt http://www.math.kobe-u.ac.jp/Asir}.

\bibitem{OT09}
Ohara, K. and Takayama, N.,
Holonomic rank of $A$-hypergeometric differential-difference
equations.
J. Pure Appl. Algebra,
\textbf{213} (2009), 1536--1544.

\bibitem{OT15}
Ohara, K. and Takayama, N.,
Pfaffian systems of $A$-hypergeometric systems II -- holonomic
gradient method,
arXiv: 1505.02947 (2015).

\bibitem{PW96}
Propp, J. and Wilson, D.,
Exact sampling with coupled Markov chains and applications to
statistical mechanics,
Random Structures Algorithms,
\textbf{9} (1996), 223--252.

%\bibitem{SST99}
%Saito, M., Sturmfels B., Takayama N.,
%Hypergeometic polynomials and integer programming,
%Compos. Math.,
%\textbf{155} (2000), 185--204.

\bibitem{Saito01}
Saito, M.,
Isomorphism Classes of $A$-Hypergeometric Systems,
Compositio Mathematica,
\textbf{128} (2001), 323--338.

\bibitem{SST00}
Saito, M., Sturmfels B., Takayama N.,
\textit{Gr\"obner Deformations of hypergeometric Differential
  Equations},
Springer, Berlin, 2000.
  
\bibitem{Sul18}
Sullivant, S.,
\textit{Algebraic Statistics},
Amer. Math. Soc., Providence, RI, 2018.

\bibitem{Sun75}
Sundberg, R.,
Some results about decomposable (or Markov-type) models
for multidimensional contingency tables: distribution of
marginals and partitioning of tests,
Scand. J. Statist.,
\textbf{2} (1975), 71--79.

\bibitem{TGKT20}
Tachibana, Y., Goto, Y., Koyama, T., Takayama, N.,
Holonomic gradient method for two-way contingency tables,
J. Algebraic Stat.,
\textbf{11} (2020), 125--153.
  
%\bibitem{TKT18}
%Takayama, N., Kuriki, S., Takemura, A.,
%$A$-hypergeometric distributions and Newton polytopes,
%Adv. Appl. Math.,
%\textbf{99} (2018), 109--133.

%\bibitem{TM19}
%Takayama, N. and Mano, S.,
%An application of computational algebra to direct sampler (in Japanese),
%In Computer Algebra - Theory and its Applications,
%RIMS K\^oky\^uroku,
%\textbf{2138} (2019), 64--72.

\end{thebibliography}
\end{document}